\font\rurm=wncyr10 scaled \magstep1
\newcommand{\finpreuve}{\mbox{} \hfill \mbox{$\Box$}}
\def\Q{{\mathbb Q}}
\def\Z{{\mathbb Z}}
\def\fq{{\mathbb F}}
\def\Reel{{\mathbb R}}
\def\p{{\mathfrak p}}
\def\P{{\mathfrak P}}
\def\q{{\mathfrak q}}
\def\A{{\mathfrak A}}
\def\d{{\partial}}
\def\N{{\rm N}}
\def\d{{\rm d}}
\def\G{{\rm G}}
\def\K{{\rm K}}
\def\L{{\rm L}}
\def\F{{\rm F}}
\def\M{{\rm M}}
\def\V{{\rm V}}
\def\X{{\rm X}}
\def\XX{{\mathcal X}}
\def\O{{\mathcal O}}
\def\U{{\mathcal U}}
\def\H{{\rm H}}
\def\Gal{{\rm Gal}}
\def\Cl{{\rm Cl}}
\def\ker{{\rm ker}}
\def\1{{\bf 1}}
\def\FF#1#2{{\displaystyle{\left(\frac{#1}{#2}\right)}}}
\def\sha{{{\textnormal{\rurm{Sh}}}}}
\def\CyB{{{\textnormal{\rurm{B}}}}}
\def\Sha{{\sha}^2}
\newtheorem{Theorem}{Theorem}
\newtheorem{Corollary}{Corollary}
\newtheorem*{Remark}{Remark}
\newtheorem{Example}{Example}
\author{Farshid Hajir, Christian Maire, Ravi Ramakrishna}
\address{Department of Mathematics, University of Massachussetts, Amherst, MA 01003, USA}
 \address{FEMTO-ST Institute, Universit\'e Bourgogne Franche-Comt\'e, CNRS,  15B avenue des Montboucons, 25000 Besancon, FRANCE} 
\address{Department of Mathematics, Cornell University, Ithaca, NY 14853-4201, USA}
\email{hajir@math.umass.edu, christian.maire@univ-fcomte.fr, ravi@math.cornell.edu}
\thanks{This work started when the second author held  visiting scholar position at Cornell University,   funded by the program "Mobilit\'e sortante" of the R\'egion Bourgogne Franche-Comt\'e, during the 2017-18   academic year. It has been finished  during a visit at Harbin Institute of Technology.    CM  thanks the  Department of Mathematics at Cornell University and the  Institute for Advanced Study in Mathematics of HIT for providing a beautiful research atmospheres.
The second author  was partially supported by the ANR project FLAIR (ANR-17-CE40-0012) and  by the EIPHI Graduate School (ANR-17-EURE-0002). The third author was supported by Simons Collaboration grant \#524863. All three authors were supported by Mathematisches Forschunginstitut Oberwolfach for a Research in Pairs visit in January, 2019. }
\begin{document}

\date{\today}

\title{On the Shafarevich group of restricted ramification extensions of number fields in the tame case}

\maketitle

\subjclass{ }

\begin{abstract}
Let $\K$ be a number field and $S$ a  finite set of places of $\K$.
We study the kernels $\Sha_S$ of maps $H^2(\G_S,\fq_p) \rightarrow \oplus_{v\in S} H^2(\G_v,\fq_p)$. 
There is a natural injection $\Sha_S \hookrightarrow \CyB_S$, 
into  the dual $\CyB_S$ of a certain readily computable Kummer group $\V_S$, 
 which is always an isomorphism in the wild case. 
The tame case is much more mysterious. Our main result is that given a finite  $X$ coprime to $p$, there
exists a finite set of places $S$ coprime to $p$ such that
$\Sha_{S\cup X} \stackrel{\simeq}{\hookrightarrow} \CyB_{S\cup X} \stackrel{\simeq}{\twoheadleftarrow}
 \CyB_X \hookleftarrow \Sha_X  $. In particular, we show that in the tame case $ \Sha_Y$ can {\it increase} with increasing $Y$.
 This is in  contrast with the wild case where $\Sha_Y$ is nonincreasing in size with increasing $Y$.
\end{abstract}


Let $\K$ be a number field, and let  $S$ be a finite set of  places  of $\K$. 
Denote by  $\K_S$  the maximal extension of~$\K$ unramified outside $S$, and set $G_S=\Gal(\K_S/\K)$.
Given a prime number $p$, let  $\Sha_S$ be the $2$-Shafarevich group associated to $\G_S$ and $p$: it is the kernel of the localization map of the  cohomology group $H^2(\G_S,\fq_p)$:
 $$\Sha_S:=\Sha(\G_S,\fq_p)= \ker\big(H^2(\G_S,\fq_p)\rightarrow \oplus_{v\in S} H^2(\G_v,\fq_p)\big),$$
where  $\G_S$ acts trivially on $\fq_p$.  We  denote by $\G_v$ the absolute Galois group of the maximal extension of the completion $\K_v$ of $\K$ at $v$.

\medskip

It is well-known that $\Sha_S$  is closely related to $\CyB_S =\left( \V_S/\K^{\times p}\right)^\vee$, where
$$\V_S=\{ x \in \K^\times, v(x)\equiv 0 \  ({\rm mod} \ p) \ \forall v; x \in \K_v^p  \  \forall v\in S  \}.$$
Clearly $\K^{\times p} \subset \V_S$ and $S \subset T \implies \V_T \subset \V_S$.
Namely, in the wild case, when $S$ contains all the places above~$p$ and all archimedean places, by Poitou-Tate duality Theorem one has $\Sha_S \simeq \CyB_S$. See for example \cite[Chapter X, \S 7]{NSW}.
It is important to note that algorithms exist to compute $\CyB_S$ via ray class group computations over $\K(\mu_p)$, so in the wild case one can,
 at least in theory,
 compute $d_p \Sha_S$. 
For the more general tame situation, one only has the following injection (due to Shafarevich and Koch, see for example \cite[Chapter 11, \S 2]{Koch} or \cite[Chapter 10, \S 7]{NSW}) 
\begin{eqnarray} \label{equation:0}
 \Sha_S \hookrightarrow \CyB_S.
 \end{eqnarray}
{\it  At present there is no general algorithm    to compute $d_p \Sha_S$ in the tame case,  short of computing $\G_S$ itself.}

 \medskip

Let us write $\K_S(p)/\K$ as the maximal pro-$p$ extension of $\K$ inside $\K_S$, and put $\G_S(p)=\Gal(\K_S(p)/\K)$.
It is an exercise to see the quotient $\G_S \twoheadrightarrow \G_S(p)$ induces the injection $\Sha_{S,p} \hookrightarrow \Sha_S$,
where $\Sha_{S,p}:=\ker \left( H^2 (\G_S(p),\fq_p) \rightarrow \oplus_{v\in S} H^2(\G_v,\fq_p)\right)$.
Observe that we can take $\G_v(p)$ instead of $\G_v$,  due to the fact that $H^2(\G_v(p),\fq_p) \simeq H^2(\G_v,\fq_p)$ 
(see for example \cite[Chapter VII, \S 5]{NSW}).

The  Shafarevich group $\Sha_S$ is central to the study of the  maximal pro-$p$ quotient  $\G_S(p)$ of $\G_S$, in particular when $S$ is coprime to $p$: obviously, 
one gets 
$$d_p H^2(\G_S(p),\fq_p) \leq  \sum_{v \in S} d_p H^2(\G_v,\fq_p)  + d_p \Sha_S 
\leq \sum_{v \in S}\delta_{v,p} + d_p \Sha_S \leq |S|+ d_p  \V_S/\K^{\times p},$$which is sufficient to produce criteria involving the infinitess of $\G_S(p)$ (thanks to the Golod-Shafarevich Theorem).
Here $\delta_{v,p}=1$ or $0$ as  $\K_v$ contains the $p$th roots of unity or does not. 

\medskip

Observe that thanks to (\ref{equation:0}), one can  force $\Sha_S$ to be trivial (see the notion of saturated set $S$ in \S \ref{section:saturated}), which can  also yield  situations where $\G_S(p)$ has cohomological dimension~$2$. See \cite{Labute} for the first examples and \cite{Schmidt} for general statements.

\medskip

Before giving our  main result, we make the following observation:  given $p$ a prime number, and two finite sets $S$ and $X$ of places of $\K$, one has: 
\begin{eqnarray}\label{equation:1}
\Sha_{S\cup X,p} \hookrightarrow \Sha_{S\cup X} \hookrightarrow \CyB_{S\cup X} \twoheadleftarrow \CyB_X \hookleftarrow \Sha_X  \hookleftarrow \Sha_{X,p}\end{eqnarray}
where the middle surjection follows as $\V_{S\cup X} \subset \V_{X}$.
To simplify, we consider only the case where the finite places $X$ and $S$ are coprime to $p$.
Here we prove:
 
\begin{Theorem} \label{theo:main}
Let $p$ be a prime number, and let $\K$ be a number field. 
Let $X$ be a finite set of  places  of $\K$ coprime to $p$.
There exist infinitely many  finite sets $S$ of finite places of $\K$, all coprime to $p$, such that:
$$\Sha_{S \cup X,p} \simeq \Sha_{S\cup X} \simeq \CyB_{S \cup X} \simeq \CyB_X \cdot$$
Moreover  such $S$ can be chosen of size $|S| \leq d_p\CyB_\emptyset$. 
\end{Theorem}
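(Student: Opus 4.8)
The plan is to combine the chain (\ref{equation:1}) with a Chebotarev argument of the type used to annihilate dual Selmer groups by adjoining auxiliary primes. From (\ref{equation:1}),
$$\Sha_{S\cup X,p}\hookrightarrow\Sha_{S\cup X}\hookrightarrow\CyB_{S\cup X}\twoheadleftarrow\CyB_X,$$
so it is enough to find a finite set $S$ of places coprime to $p$ for which (i) the surjection $\CyB_X\twoheadrightarrow\CyB_{S\cup X}$ is an isomorphism and (ii) the composite injection $\Sha_{S\cup X,p}\hookrightarrow\CyB_{S\cup X}$ is onto; then all four groups coincide. For (i): that surjection is dual to the inclusion $\V_{S\cup X}/\K^{\times p}\hookrightarrow\V_X/\K^{\times p}$, so it is an isomorphism exactly when every $x\in\V_X$ lies in $\K_v^{\times p}$ for each $v\in S$. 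Put $\L:=\K(\mu_p,\V_X^{1/p})$, a finite extension of $\K$. If every place of $S$ splits completely in $\L/\K$, then for $v\in S$ one gets $\mu_p\subset\K_v$ and $\V_X\subset\K_v^{\times p}$, so $\V_{S\cup X}/\K^{\times p}=\V_X/\K^{\times p}$; moreover $\delta_{v,p}=1$, hence $H^2(\G_v,\fq_p)\simeq\fq_p$ --- a point needed for (ii). By Chebotarev there are infinitely many such places, and we look for $S$ among them, so (i) holds whichever of them we take.

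For (ii) set $C_S:=\CyB_{S\cup X}/\Sha_{S\cup X,p}$, the cokernel of the composite injection above; condition (ii) says $C_S=0$. Since this injection refines (\ref{equation:0}) and $\CyB_{S\cup X}\simeq\CyB_X$ once (i) holds, $d_pC_S\le d_p\CyB_X=d_p\V_X/\K^{\times p}\le d_p\V_\emptyset/\K^{\times p}=d_p\CyB_\emptyset$, the last inequality because $\V_X\subseteq\V_\emptyset$. We build $S$ one place at a time, $\emptyset=S_0\subset S_1\subset\cdots\subset S_m=S$ with $S_j\setminus S_{j-1}=\{q_j\}$, each $q_j$ split completely in $\L$ (so that (i) persists) and chosen so that $d_pC_{S_j}=d_pC_{S_{j-1}}-1$ whenever $C_{S_{j-1}}\ne0$. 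Since $d_pC_{S_0}=d_pC_\emptyset\le d_p\CyB_\emptyset$, after $m\le d_p\CyB_\emptyset$ steps we reach $C_S=0$, that is $\Sha_{S\cup X,p}\simeq\Sha_{S\cup X}\simeq\CyB_{S\cup X}\simeq\CyB_X$, with $|S|=m\le d_p\CyB_\emptyset$. Infinitely many valid $S$ then follow, because at each step Chebotarev leaves infinitely many admissible $q_j$ (in the degenerate case $C_\emptyset=0$, already $S=\emptyset$ works, and one checks separately that adjoining one prime split completely in a sufficiently large extension does no harm).

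The heart of the matter is the inductive step: given $S'=S_{j-1}$ with $C_{S'}\ne0$, produce an admissible $q=q_j$ --- coprime to $p$, split completely in $\L$, and split in the $p$-extensions already generated by $q_1,\dots,q_{j-1}$ --- with $d_pC_{S'\cup\{q\}}=d_pC_{S'}-1$. This is a global-duality computation of the kind used to kill dual Selmer groups by adjoining auxiliary primes, going back to work of Ramakrishna and of Taylor; compare also the work of Labute and of Schmidt on restricted-ramification pro-$p$ groups. One first reinterprets the Pontryagin dual $C_{S'}^\vee\subseteq\V_X/\K^{\times p}$ as a Selmer-type group: because $\G_{S'\cup X}(p)$ is unramified above $p$, the nine-term Poitou--Tate sequence for $\fq_p$ does not close up, and the resulting discrepancy --- which is exactly $C_{S'}$ --- is governed by the unramified-at-$p$ local cohomology, i.e.\ by Frobenius at the primes above $p$. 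Concretely $C_{S'}$ is cut out by Frobenius conditions in a finite Galois ``governing'' extension $\M_{S'}/\K$ assembled from the $p$-extension classified by $\Hom(\G_{S'\cup X},\fq_p)$, the decomposition of the primes above $p$ in it, and the Kummer extensions attached to $\V_X$; a nonzero class in $C_{S'}$ singles out a nontrivial Frobenius pattern in $\M_{S'}$, and any prime $q$ realizing a suitable such pattern while remaining split in $\L$ both enlarges $\Sha_{S'\cup X,p}$ and keeps $\CyB$ equal to $\CyB_X$, so that $d_pC$ drops by one. The main obstacle is proving such a $q$ exists, i.e.\ that the Chebotarev condition coming from $\M_{S'}$ is compatible with --- not swallowed by --- the splitting condition in $\L$: one must show that the ``obstruction at $p$'' is transverse to the conditions defining $\V_X$, essentially that $\M_{S'}$ is not contained in the compositum of $\L$ with the earlier Kummer and ray-class extensions. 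This transversality rests on the nonvanishing of a local cup-product pairing at the primes above $p$ together with the Hasse principle / Grunwald--Wang input, and the prime $p=2$ and the real places will require the customary separate bookkeeping; the rest is routine, and the bound $|S|\le d_p\CyB_\emptyset$ falls out of the step count.
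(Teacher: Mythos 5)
Your reductions are fine as far as they go: choosing the primes of $S$ to split completely in $\K(\mu_p,\sqrt[p]{\V_X})$ does force $\V_{S\cup X}=\V_X$, hence $\CyB_{S\cup X}\simeq\CyB_X$, and the monotonicity of the cokernel of $\Sha_{\cdot,p}\hookrightarrow\CyB_{\cdot}$ is indeed available (it is Lemma \ref{chase}). But the entire proof rests on your inductive step, and that step is asserted, not proved. You claim that the cokernel $C_{S'}$ is ``cut out by Frobenius conditions in a finite Galois governing extension $\M_{S'}/\K$'' coming from an unclosed Poitou--Tate sequence, so that a Chebotarev choice of one auxiliary prime makes $d_pC$ drop by one while staying split in $\L$. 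No such governing extension is constructed, and no argument is given that membership in $\Sha_{S'\cup X,p}$ (equivalently, the size of $C_{S'}$) is detected by splitting conditions at all. This is not a routine ``kill the dual Selmer group'' step in the style of Ramakrishna--Taylor: those arguments work because the relevant Selmer and dual Selmer groups are defined by local conditions, so a class is annihilated exactly when a Frobenius avoids a computable subset of a known finite extension. In the tame case $\Sha_{S,p}$ is precisely \emph{not} of this form --- Poitou--Tate does not apply to $\G_S$ when $S$ omits the places above $p$, and the paper emphasizes that there is no criterion for $d_p\Sha_S$ short of knowing $\G_S$ itself. If your claim that $C_{S'}$ is governed by Frobenius at the primes above $p$ were available in the computable form you need, it would essentially already give such an algorithm; so the ``transversality'' paragraph is the whole difficulty, and it is left unproved.

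For contrast, the paper never tries to detect classes of $\Sha$ directly by Frobenius conditions, and it does not enlarge $S$ one prime at a time to push $\Sha$ up. Instead it introduces an auxiliary set $T$, disjoint from $S$ and \emph{not} put into the ramification set of the final statement, chosen so that $T\cup X$ is saturated with $d_p\G_{T\cup X}=d_p\G_X$ (so $|T|=d_p\CyB_X$); then Hochschild--Serre for $1\to\H_{S\cup X,T}\to\G_{S\cup X\cup T}(p)\to\G_{S\cup X}(p)\to 1$ identifies $\Sha_{S\cup X,p}$ with $\X_{S\cup X,T}^\vee$ because $\Sha_{S\cup X\cup T,p}=0$ by saturation. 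The problem thus becomes: show the inertia at each $\q\in T$ contributes an independent generator to the coinvariants $\X_{S\cup X,T}$, i.e.\ survives in an \emph{abelian} quotient --- and that is a ray-class statement which \emph{is} governed by Frobenius conditions, via the mod-$p^2$ governing field $\K''(\sqrt[p^2]{\O_\K^\times})$ and Proposition \ref{key-proposition0} (existence of a cyclic degree-$p^2$ extension of $\H$, abelian over $\K$, unramified outside $S\cup X\cup\{\q\}$ with inertia of order exactly $p$ at $\q$). The set $S$ is chosen by Chebotarev in $\Gal(\F/\F_1)$ exactly to make these $p^2$-extensions exist, and splits completely in $\F_0$ so that $\CyB$ is unchanged. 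Your proposal is missing an analogue of this conversion (auxiliary saturated $T$, the spectral-sequence identification, and the level-$p^2$ governing field), which is the actual content of the theorem; without it the inductive step has no justification.
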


\medskip

Set $m:=d_p \CyB_\emptyset$. 
Note $\K^{ \times p} \subset \V_S$ for all $S$.
In particular, we have the exact sequence
$$ 0 \rightarrow \O^\times_\K/\O_\K^{\times p} \rightarrow \V_\emptyset/\K^{ \times p} \rightarrow Cl_\K [p] \rightarrow 0$$
 so $m=d_p \Cl_\K+ d_p\O_\K^\times$.

\medskip

As mentioned above, the computation of  $\Sha_S$ is very difficult in the tame case. 
Indeed, the only examples we know of where the map $\Sha_{\emptyset,p} \hookrightarrow \CyB_\emptyset$ is {\it not} an isomorphism are
those in which we know the relation rank of $G_\emptyset(p)$ by knowing the full group itself.
Using Theorem \ref{theo:main}, one may give situations where the value of $|\Sha_S|$ is known without being trivial.  As corollary, we get

\begin{Corollary} \label{coro2} 
There exist infinitely many  finite sets $S_0 \subset S_1 \subset \cdots \subset  S_m$  of finite places of $\K$ all coprime to $p$, such that for $i=0,\cdots, m$, one has
$$\Sha_{S_i,p} \simeq \Sha_{S_i} \simeq \fq_p^{m-i}.$$ 
\end{Corollary}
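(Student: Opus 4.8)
The plan is to build the chain $S_0\subset S_1\subset\cdots\subset S_m$ by $m+1$ successive applications of Theorem~\ref{theo:main}, inserting between two consecutive applications a single auxiliary place chosen by Chebotarev so that $d_p\CyB$ drops by exactly one. It is tempting to try to produce the whole chain from one application: take a nested family $\emptyset=X_0\subset\cdots\subset X_m$ with $d_p\CyB_{X_i}=m-i$, apply the theorem to $X_m$, and put $S_i=S\cup X_i$. But this fails, because in the tame case $\Sha_Y$ need not be monotone in $Y$ (the very phenomenon pointed out in the abstract), so $\Sha_{S\cup X_m}\simeq\CyB_{X_m}=0$ says nothing about the intermediate $\Sha_{S\cup X_i}$; the isomorphism $\Sha\simeq\CyB$ has to be re-forced at each level.

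The first ingredient I would isolate is the following Chebotarev statement: for any finite set $T$ of places coprime to $p$ with $\CyB_T\neq 0$, there are infinitely many finite places $v\notin T$, coprime to $p$, with $d_p\CyB_{T\cup\{v\}}=d_p\CyB_T-1$. To prove it, choose $x\in\V_T$ with $\bar x\neq 0$ in $\V_T/\K^{\times p}$; since $[\K(\mu_p):\K]$ is prime to $p$, one has $x\notin\K(\mu_p)^{\times p}$, so $\K(\mu_p,\sqrt[p]{x})/\K(\mu_p)$ is cyclic of degree $p$, and by Chebotarev infinitely many finite $v$ (coprime to $p$, outside $T$, unramified in this tower) split completely in $\K(\mu_p)/\K$ while being non-split in $\K(\mu_p,\sqrt[p]{x})/\K(\mu_p)$. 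For such $v$ one has $\mu_p\subset\K_v$, hence $\O_v^\times/\O_v^{\times p}\simeq\fq_p$ since $v\nmid p$; as every element of $\V_T$ has valuation $\equiv 0\pmod p$ at $v$, the localization map $\V_T/\K^{\times p}\to\K_v^\times/\K_v^{\times p}$ factors through $\O_v^\times/\O_v^{\times p}\simeq\fq_p$, and it is nonzero because $x\notin\K_v^{\times p}$. Hence its kernel, which is exactly $\V_{T\cup\{v\}}/\K^{\times p}$, has codimension one.

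Now the construction, by induction. For the base case, Theorem~\ref{theo:main} with $X=\emptyset$ supplies (infinitely many) finite $S_0$ coprime to $p$ with $\Sha_{S_0,p}\simeq\Sha_{S_0}\simeq\CyB_{S_0}\simeq\CyB_\emptyset\simeq\fq_p^m$. Assume $S_i$ with $0\le i<m$ has been constructed, coprime to $p$, with $\Sha_{S_i,p}\simeq\Sha_{S_i}\simeq\CyB_{S_i}\simeq\fq_p^{m-i}$. Since $\CyB_{S_i}\neq 0$, the statement above applied with $T=S_i$ gives a place $v_{i+1}\notin S_i$, coprime to $p$, with $d_p\CyB_{S_i\cup\{v_{i+1}\}}=m-i-1$. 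Apply Theorem~\ref{theo:main} with $X=S_i\cup\{v_{i+1}\}$ (coprime to $p$) to get a finite set $S'$ coprime to $p$, and set $S_{i+1}:=S'\cup S_i\cup\{v_{i+1}\}$. Then $S_{i+1}$ contains $S_i$ properly, is coprime to $p$, and
$$\Sha_{S_{i+1},p}\simeq\Sha_{S_{i+1}}\simeq\CyB_{S_{i+1}}\simeq\CyB_{S_i\cup\{v_{i+1}\}}\simeq\fq_p^{m-i-1}.$$
Iterating down to $i=m$ yields the chain, with $\Sha_{S_m}=0$; and since each application of Theorem~\ref{theo:main} and each choice of $v_{i+1}$ can be made in infinitely many ways, we obtain infinitely many such chains.

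The corollary is therefore a fairly formal consequence of Theorem~\ref{theo:main}. The two points that need genuine care are: first, the Chebotarev statement, where the subtlety is to force the drop in $d_p\CyB$ to be \emph{exactly} one, which requires $v$ to split in $\K(\mu_p)/\K$ (equivalently $\delta_{v,p}=1$), not merely to be non-split in the Kummer extension attached to $x$; and second, the structural point noted above, that the non-monotonicity of $\Sha$ in the tame case forces one to re-establish $\Sha\simeq\CyB$ at every level, so the chain must be assembled by iterated use of Theorem~\ref{theo:main}, taking unions to preserve nesting, rather than in one shot. Neither seems likely to present a real difficulty.
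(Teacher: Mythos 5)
Your proposal is correct, but it takes a genuinely different route from the paper. The paper proves the corollary inside a \emph{single} application of the key construction (Proposition~\ref{prop:ramification-in-p2} with $X=\emptyset$): it takes the sets $S$ and $T=\{\p_1,\dots,\p_m\}$ produced there, puts $S_0=S$ and $S_{i+1}=S_i\cup\{\p_{i+1}\}$, and pins down the dimensions by a sandwich count: since $d_p\G_{S_i}$ is constant in $i$, Lemma~\ref{chase} applied to each step gives $d_p\Sha_{S_i,p}\le d_p\Sha_{S_{i+1},p}+1$, and the two endpoints are known ($d_p\Sha_{S_0,p}=|T|=m$ from the proof of Theorem~\ref{theo:main}, and $\Sha_{S_m,p}=0$ by saturation), so every step must drop by exactly one. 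You instead invoke Theorem~\ref{theo:main} as a black box $m+1$ times, interleaved with a Kummer--Chebotarev lemma producing, for any $T$ with $\CyB_T\neq 0$, a place $v$ with $d_p\CyB_{T\cup\{v\}}=d_p\CyB_T-1$; that lemma and its proof are fine (the only point to make explicit is that $\K'(\sqrt[p]{x})/\K$ is Galois because $x\in\K^\times$, and that the nontrivial elements of $\Gal(\K'(\sqrt[p]{x})/\K')$ form a conjugation-stable set, so Chebotarev can be applied over $\K$), and your insistence on $v$ splitting in $\K'/\K$ so that the drop is capped at one is exactly the right subtlety. What the paper's argument buys: a much tighter chain (all $S_i$ share the same base $S$ and consecutive sets differ by a single prime of $T$, with $|S_0|\le m$), no auxiliary lemma, and a refutation of your structural objection --- a one-shot construction \emph{does} control all intermediate levels, not through the black-box statement applied to the largest set, but by exploiting the internal data of the construction (the set $T$, the constancy of $d_p\G$, and the known endpoint dimensions). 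What your argument buys: the corollary becomes a formal consequence of the statement of Theorem~\ref{theo:main} alone, at the cost of larger and less explicit sets $S_i$ (each step adjoins a whole new auxiliary set $S'$) and of the extra Chebotarev input.
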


\medskip

\begin{Remark}
We will see that the sets $S$ and  $S_i$ can be explicitly given by the Chebotarev density Theorem in some governing field extension over $\K$.
\end{Remark}

\begin{Remark}
Let $\K_S^{ta}/\K$ be the maximal Galois extension of $\K$, unramified outside $S$, and tamely ramified at $S$; put $\G_S^{ta}=\Gal(\K_S^{ta}/\K)$. Then instead of considering $\G_S$ one may consider $\G_S^{ta}$ which also surjects onto $\G_S(p)$. Observe here that $\G_S^{ta}$ may be finite (typically when the discriminant of $\K$ and the norm of prime ideals of $S$ are too small), even trivial (for example when $\K=\Q$ and $S=\emptyset$).
\end{Remark}

{\bf Notations}

$-$ We fix a prime number $p$ and a number field $\K$. 

$-$ Put $\K'=\K(\zeta_p)$ and $\K''=\K(\zeta_{p^2})$, where $\zeta_{p^2}$ is some primitive $p^2$th root of unity, and $\zeta_p=\zeta_{p^2}^p$.

$-$ We denote by $\O_\K$ the ring of integers of $\K$, by $\O_\K^\times$ the group of units of $\O_\K$, and by $\Cl_\K$ the class group of $\K$.
 
$-$   We identify a prime ideal $\p \subset \O_\K$ with the place $v$ it defines. We write $\K_v$ for the completion of 
$\K$ at $v$ and $\U_v$ for the units of the local field $\K_v$; when $v$ is  archimedean, put $\U_v=\K_v^\times$. 
 
$-$ One says that a prime ideal  $\p$ is {\it tame} if  $\# \O_\K/\p \equiv 1 ({\rm mod} \ p)$, which is equivalent to 
$\mu_p \subset \K_v$, that is  $\delta_{v,p}=1$.

$-$ If $S$ is a finite set of places of  $\K$, we denote by $\K_S(p)/\K$ (resp. $\K_S^{ab}(p)/\K$) the maximal pro-$p$ extension (resp. abelian) of $\K$ unramified outside $S$, and we put $\G_S(p)=\Gal(\K_S(p)/\K)$ (resp. $\G_S^{ab}(p)=\Gal(\K_S^{ab}(p)/\K)$). For $S=\emptyset$,  we denote by  $\H:=\K_\emptyset^{ab}(p)$ the Hilbert $p$-class field of $\K$.

$-$  By convention, the infinite places in $S$ are only real. Let us write $S=S_0 \cup S_\infty$, where $S_0$ contains only the finite places and $S_0$  only the real ones. Put $\delta_{2,p}=  \left\{ \begin{array}{cc} 1 & p=2 \\ 0 & \mbox{otherwise} \end{array}\right.$

 $-$ The set $S$ is said to be coprime to $p$, if all finite places $v $  of $S$ are coprime to $p$; it is said to be tame if $S$ is coprime to $p$ and $S_\infty=\emptyset$.

$-$ Put $\V_S=\{ x \in \K^\times, v(x)\equiv 0 \  ({\rm mod} \ p) \ \forall v; x \in \K_v^p \ \forall v\in S\}$. Note $\K^{ \times p} \subset \V_S$ for all $S$.

\medskip

\section{Preliminaries}

\subsection{Extensions with prescribed ramification} \label{section:cyclicdegreep2}
Let $p$ be a prime number.

\subsubsection{Governing fields}
We recall a result of Gras-Munnier  
 (see \cite[Chapter V, \S 2, Corollary 2.4.2]{gras},  as well as \cite{Gras-Munnier}) which gives a criterion for the existence of totally ramified $p$-extension at some set $S$ (and unramified outside $S$).
Put  $\K':=\K(\zeta_p)$ and  consider the governing field $\L':=\K'(\sqrt[p]{\V_\emptyset})$. The extension $\L'/\K'$ 
has Galois group isomorphic to $(\Z/p\Z)^{r_1+r_2-1+\delta+d}$, where $d=d_p \Cl_\K$.   
 
 \medskip
 
 Given a place $v$ of $\K$, we choose some place $w|v$ of $\L'$ above $v$, and we consider $\sigma_v \in \Gal(\L'/\K')$ defined as follows:
 \begin{enumerate}
 \item[$-$]   if $v$ corresponds to a prime ideal $\p$ coprime to $p$, and $\P$ to $w$, then $\P$ is unramified in $\L'/\K'$, and then  
$\displaystyle{\sigma_v=\sigma_\p= \FF{\L'/\K'}{\P}}$  corresponds to the Frobenius elements at $\P$
in $\Gal(\L'/\K')$;
\item[$-$] if $v$ corresponds to a real place, then $\sigma_v$ is  the Artin symbol at $w$: $\sigma_v(\sqrt{\varepsilon})=+1$ is $\varepsilon$ is positive at $w$, and $-1$ otherwise.
\end{enumerate}
While $\sigma_v$ does in fact depend on the choice of $\P$, it is easy to see a different choice of $\P$ gives a nonzero multiple 
of the previous choice of $\sigma_v$ in the $\fq_p$-vector space $\Gal(\L'/\K')$.
This is all we need when invoking Theorem~\ref{theo:Gras-Munnier} below.
By abuse,  we will  also call the  $\sigma_v$'s  Frobenius elements.

\begin{theo}[Gras-Munnier] \label{theo:Gras-Munnier}  
Let $S=\{v_1,\cdots, v_t\}$ be a set of  places of $\K$ coprime to $p$. There exists a cyclic degree $p$ extension $\L/\K$, unramified outside $S$ and totally ramified at 
each place of $S$, if and only if, for  $i=1,\cdots, t$, there exists $a_i \in \fq_p^\times$, such that
$$\prod_{i=1}^t \sigma_{v_i}^{a_i} =1 \ \in \Gal(\L'/\K').$$
\end{theo}
 
When  $S$ is as  in the Gras-Munnier criterion, i.e. the necessary and sufficient condition of the theorem holds, one says that the elements $\sigma_{v_i}$  satisfy a {\it  strongly nontrivial relation}.  When one only has $\prod_{i=1}^t\sigma_{v_i}^{a_i} =1$, with the $a_i$ not all zero, one says that the  $\sigma_{v_i}$'s satisfy a {\it nontrivial relation}.

\begin{rema}
In fact, we don't find Theorem \ref{theo:Gras-Munnier} in \cite{gras} in this form, the difference coming from the real places (and then only for  $p=2$). Indeed, one starts with the following: for a real place $v$, in our context we speak of {\it ramification}, and in the context of \cite{gras} Gras speaks of {\it decomposition}.
Hence the governing field in \cite{gras} is smaller than $\L'$ and the condition he obtains did not involve the $\sigma_v$'s, $v\in S_\infty$ (in fact, in his case these $\sigma_v$ are trivial).  But the proof is the same, we can follow it without difficulty due to the fact that for $v\in S_\infty$, one has: $\U_v/\U_v^2=\Reel^\times/\Reel^{\times 2} \simeq \Z/2\Z$; see Lemmas 2.3.1, 2.3.2, 2.3.4 and 2.3.5 of \cite{gras}. 
\end{rema}

\begin{rema} \label{remark:frobenius}
The results of  \cite{gras} allow us to also obtain  the following: put $\L_0':=\K'(\sqrt[p]{\O_\K^\times})$, then 
$\#\G_S^{ab}(p) > \# \G_\emptyset^{ab}(p)$ if and only if, there exists some nontrivial relation  in $\Gal(\L_0'/\K')$ between  the $\sigma_v$'s, $v \in S$.
See also Proposition \ref{key-proposition0}.
\end{rema}

As consequence of Theorem \ref{theo:Gras-Munnier}, one has:

\begin{coro} \label{coro_grascriteria}
Given $p$ and $\K$, and two finite sets $T$ and $S$  of  places of $\K$ coprime to $p$, there exists a cyclic degree $p$ extension $\L/\K$, unramified outside $S\cup T$ and  ramified at each place of $S$ 
(no condition on the places of  $T$),
if and only if the $\sigma_v$'s for $v \in S$ satisfy a strongly nontrivial relation in the quotient $\Gal(\L'/\K')/\langle \sigma_v,  v \in T\rangle$. 
\end{coro}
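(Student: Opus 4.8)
The plan is to deduce this directly from Theorem~\ref{theo:Gras-Munnier} by stratifying the candidate extensions according to their exact ramification locus inside $T$. We may assume $S\cap T=\emptyset$ (this is needed: already for $S=T=\{v_0\}$ the statement fails when $\sigma_{v_0}\neq 1$). First I would observe that a cyclic degree $p$ extension $\L/\K$ which is unramified outside $S\cup T$ and ramified at each place of $S$ has ramification locus of the form $S\cup T'$ for a unique subset $T'\subseteq T$; here one uses that for a degree $p$ extension ``ramified at $v$'' is the same as ``totally ramified at $v$'', the inertia subgroup of $\Gal(\L/\K)\simeq\fq_p$ being either trivial or everything. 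Hence such an $\L$ exists if and only if there is some $T'\subseteq T$ together with a cyclic degree $p$ extension of $\K$ unramified outside $S\cup T'$ and totally ramified at every place of $S\cup T'$, and by Theorem~\ref{theo:Gras-Munnier} the latter is equivalent to the existence of a strongly nontrivial relation
$$\prod_{v\in S}\sigma_v^{a_v}\cdot\prod_{v\in T'}\sigma_v^{c_v}=1 \ \text{ in } \ \Gal(\L'/\K'), \qquad a_v,\,c_v\in\fq_p^\times .$$

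Next I would match this with the condition in the statement. Put $N:=\langle\sigma_v,\ v\in T\rangle$ and $Q:=\Gal(\L'/\K')/N$. Reducing the displayed relation modulo $N$ annihilates the factors indexed by $T'\subseteq T$ and leaves $\prod_{v\in S}\bar\sigma_v^{a_v}=1$ in $Q$ with each $a_v\in\fq_p^\times$, that is, a strongly nontrivial relation among the $\sigma_v$, $v\in S$, in $Q$. Conversely, given $a_v\in\fq_p^\times$ ($v\in S$) with $\prod_{v\in S}\bar\sigma_v^{a_v}=1$ in $Q$, the definition of $N$ (an $\fq_p$-subspace of $\Gal(\L'/\K')$ spanned by the $\sigma_v$, $v\in T$) lets us write $\prod_{v\in S}\sigma_v^{a_v}=\prod_{v\in T}\sigma_v^{b_v}$ in $\Gal(\L'/\K')$ for suitable $b_v\in\fq_p$; setting $T':=\{v\in T:\ b_v\neq 0\}$ this reads $\prod_{v\in S}\sigma_v^{a_v}\cdot\prod_{v\in T'}\sigma_v^{-b_v}=1$ with all exponents in $\fq_p^\times$, which is precisely a strongly nontrivial relation for the set $S\cup T'$. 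Applying Theorem~\ref{theo:Gras-Munnier} to $S\cup T'$ then yields a cyclic degree $p$ extension unramified outside $S\cup T'\subseteq S\cup T$ and totally ramified at each place of $S\cup T'$, hence in particular at each place of $S$; this is the desired $\L$.

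These two passages are mutually inverse at the level of relations, so the equivalence follows. I do not expect a genuine obstacle here beyond the bookkeeping: the one point to monitor is that the quotient map $\Gal(\L'/\K')\twoheadrightarrow Q$ preserves the requirement that the exponents at the places of $S$ lie in $\fq_p^\times$ (automatic, a nonzero exponent stays nonzero) and, dually, that a relation valid in $Q$ lifts to one in $\Gal(\L'/\K')$ whose extra factors are supported on $T$ (immediate from the definition of $N$) — together with the standing assumption $S\cap T=\emptyset$, which is what makes the bijection ``$\L\leftrightarrow$ (its ramification set $S\cup T'$)'' well posed.
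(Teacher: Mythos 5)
Your argument is correct and is essentially the derivation the paper intends: the corollary is stated there as an immediate consequence of Theorem~\ref{theo:Gras-Munnier}, and your stratification by the exact ramification locus $S\cup T'$, $T'\subseteq T$, combined with lifting/projecting relations through the subspace $\langle\sigma_v,\ v\in T\rangle$ of the $\fq_p$-vector space $\Gal(\L'/\K')$, is precisely the straightforward bookkeeping that justifies it. Your observation that one must (as the paper implicitly does) take $S\cap T=\emptyset$ is a fair and correct reading of an implicit hypothesis.
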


\subsubsection{Extensions over the Hilbert $p$-class field  of $\K$ that are abelian over $\K$.}

As noted in the beginning of Chapter V of \cite{gras}, the result about the existence of a degree-$p^e$ cyclic extension with prescribed ramification can be generalized in different forms. 
Let $\H$ be the Hilbert class field of $\K$. In what follows, we only need  the existence of a degree-$p^2$ cyclic extension of $\H$, abelian over $\K$, with prescribed ramification.

\smallskip

Now we follow the  strategy of \cite[Chapter V, \S2, d)]{gras}. Put $B=\Gal(\K_S^{ab}(p)/\H)$. Take $\Sigma$ a finite set of places of $\K$ coprime to $p$ (not necessarily satisfying the congruence  $\N(\p) \equiv 1 ({\rm mod } \ p^2)$ when $\p \in \Sigma_0$). By class field theory, we get
$$1 \longrightarrow (B/B^{p^2})^* \stackrel{\rho}{\longrightarrow} \bigoplus_{v\in \Sigma} (\U_v/(\U_v)^{p^2})^* \longrightarrow \big(\iota (\O_\K^\times)\big)^* \longrightarrow 1,$$
where $\displaystyle{\iota: \O_\K^\times \longrightarrow  \bigoplus_{v \in \Sigma} \U_v/(\U_v)^{p^2}}$ is the diagonal embedding.

\medskip

 A cyclic degree-$p^2$ extension $\M$ of 
$\H$, abelian over $\K$ and 
unramified outside $\Sigma$ is given by a character~$\psi$ of~$B/B^{p^2}$ of order  $p^2$ as follows: 

Given $\psi_v \in (\U_v/(\U_v)^{p^2})^*$ for all $v\in \Sigma$, there exists a character $\psi$ of  $B/B^{p^2}$ such that $\psi_{|\U_v}=\psi_v$ if and only if, 
\begin{eqnarray}\label{equation:2} \forall \varepsilon \in \O_\K^\times, \ \prod_{v \in \Sigma} \psi_v(\varepsilon)=1.\end{eqnarray}

As $\M/\H$ is totally ramified at at least one prime ideal,   at least one $\psi_v$ has order $p^2$.


\medskip

 Now we will focus on the case where $\Sigma$ contains only finite places, and we use the notation $\p$ instead of $v$.
 
 \medskip
 
Let $S$ be a finite non-empty set of tame places of $\K$ where each prime $\p$ (corresponding to $v \in S$) is such that  $\N(\q) \equiv 1 ({\rm mod } \ p^2)$.
 Let us write now $\Sigma_\q=S  \cup T_\q$, where $T_\q=\{\q\}$ is also tame. We are interested in the existence of a degree-$p^2$ cyclic extension $\K_\q/\H$, abelian over $\K$ and
 unramified outside $\Sigma_\q$, such that $\K_\q/\H$ has degree $p^2$ and 
for which the inertia degree at $\q$ is exactly $p$.

For $\p \in \Sigma_\q$, let us fix $\chi_\p$ a generator of  $(\U_\p/(\U_\p)^{p^2})^*$.
By (\ref{equation:2}), $\K_\q$  exists if and only if, there exist $a_\q \in \fq_p^\times$, and $b_\p \in \Z/p^2$, $\p \in S$, such that 
$$\forall \varepsilon \in \O_\K^\times, \  {\hat \chi_\q}^{a_\q}(\varepsilon) \prod_{\p \in S}   \chi_\p^{b_\p}(\varepsilon)=1,$$
where $${\hat \chi_\q}=\left\{\begin{array}{ll} 
\chi_\q & {\rm if} \ \N(\q) \nequiv 1 ({\rm mod } \ p^2) \\
\chi_\q^p &   {\rm if} \ \N(\q) \equiv 1 ({\rm mod } \ p^2)
\end{array}
\right. ,$$
and such that at least one $b_\p \in (\Z/p^2\Z)^\times$.
 
This last condition can be rephrased thanks to Kummer theory with the  following governing field (see \cite[Chapter V, \S 2, d)]{gras}): $$\L=\K''(\sqrt[p^2]{\O_\K^\times}),$$ where $\K''=\K(\zeta_{p^2})$. 
For each prime $\p \in \Sigma_\q$ let us choose a prime $\P|\p$ of $\K''$, and denote by $\sigma_\p$ the Frobenius of $\P$ in $\Gal(\L/\K'')$. 
As before,  $\sigma_\p$  depends on $\P|\p$ only up to a power coprime to $p$.

The above discussion allows us to obtain the following:

\begin{prop} \label{key-proposition0}
There exists a degree-$p^2$ cyclic extension $\K_\q/\H$, abelian over $\K$, unramified outside $\Sigma_\q$,  
for which the inertia degree at $\q$ is exactly $p$, if and only if, there exists 
$a_\q \in \fq_p^\times$, and  $b_\p\in \Z/p^2\Z$, $\p \in S$, such that 
\begin{eqnarray} \label{equation:3} {\hat \sigma}_\q^{a_\q} \prod_{\p \in S}   \sigma_\p^{b_\p}   =1 \in \Gal(\L/\K''),
\end{eqnarray}
where $${\hat \sigma_\q}=\left\{\begin{array}{ll} 
\sigma_\q & {\rm if} \ \N(\q) \nequiv 1 ({\rm mod } \ p^2) \\
\sigma_\q^p &   {\rm if} \ \N(\q) \equiv 1 ({\rm mod } \ p^2)
\end{array}
\right. ,$$
with at least one $b_\p \in (\Z/p^2\Z)^\times$.
\end{prop}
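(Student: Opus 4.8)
The plan is to combine the class field theory exact sequence displayed just above the statement with Kummer theory over $\K''$, following the strategy of \cite[Chapter V, \S 2, d)]{gras}. The preliminary discussion has already done most of the algebra; what remains is to organize it and to perform the Kummer-theoretic translation carefully.

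First I would unwind what the existence of $\K_\q$ means in terms of local characters. By the exact sequence $1\to (B/B^{p^2})^*\to\bigoplus_{\p\in\Sigma_\q}(\U_\p/\U_\p^{p^2})^*\to(\iota(\O_\K^\times))^*\to 1$, a cyclic degree-$p^2$ extension of $\H$ that is abelian over $\K$ and unramified outside $\Sigma_\q$ is the same datum as a character $\psi$ of $B/B^{p^2}$ of order $p^2$, equivalently a family $(\psi_\p)_{\p\in\Sigma_\q}$ with $\psi_\p\in(\U_\p/\U_\p^{p^2})^*$ satisfying $\prod_\p\psi_\p(\varepsilon)=1$ for all $\varepsilon\in\O_\K^\times$ (condition $(\ref{equation:2})$). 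Since $\p$ is tame, $\U_\p/\U_\p^{p^2}$ is cyclic, of order $p^2$ if $\N(\p)\equiv 1\pmod{p^2}$ and of order $p$ otherwise; writing its dual in terms of the fixed generator $\chi_\p$, we have $\psi_\p=\chi_\p^{c_\p}$. The inertia subgroup at $\p$ in $\Gal(\K_\q/\H)$ is the image of $\U_\p$, which is cyclic of order equal to the order of $\psi_\p$; so requiring the inertia subgroup at $\q$ to have order exactly $p$ amounts to asking $\psi_\q$ to have order $p$, i.e. $\psi_\q=\hat\chi_\q^{a_\q}$ with $a_\q\in\fq_p^\times$ — this is precisely the rôle of the convention $\hat\chi_\q$ ($\chi_\q$ in the non-split case, $\chi_\q^p$ in the case $\N(\q)\equiv 1\pmod{p^2}$). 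Requiring $\psi$ itself to have order $p^2$, given that $\psi_\q$ has order $p$, forces some $\psi_\p$ with $\p\in S$ to have order $p^2$, i.e. $b_\p:=c_\p\in(\Z/p^2\Z)^\times$ for some $\p\in S$. This gives the reformulation: $\K_\q$ exists iff there are $a_\q\in\fq_p^\times$ and $b_\p\in\Z/p^2\Z$ with at least one $b_\p$ a unit such that $\hat\chi_\q^{a_\q}\prod_{\p\in S}\chi_\p^{b_\p}$ vanishes identically on $\O_\K^\times$.

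Next I would translate ``vanishes on $\O_\K^\times$'' into the relation $(\ref{equation:3})$ via the governing field $\L=\K''(\sqrt[p^2]{\O_\K^\times})$. By Kummer theory $\Gal(\L/\K'')$ is canonically dual to the image $A$ of $\O_\K^\times$ in $(\K'')^\times/((\K'')^\times)^{p^2}$, via the perfect pairing $(\sigma,\varepsilon)\mapsto\sigma(\alpha)/\alpha\in\mu_{p^2}$ where $\alpha^{p^2}=\varepsilon$. For a tame prime $\p$ of $\K$ — unramified in $\L$ since units have no valuation away from $p$ — and a choice of $\P\mid\p$ in $\K''$, the explicit (tame) reciprocity law at $\p$ says that $\mathrm{Frob}_\P$ acts on $\sqrt[p^2]{\varepsilon}$ through the image of $\varepsilon$ in $\U_\p/\U_\p^{p^2}$ (raising to a $(\N\P-1)/p^{2}$-type exponent), so that under the pairing $\sigma_\p$ corresponds to $\chi_\p$ up to a scalar coprime to $p$ coming from the choices of $\P$ and of the generators; this is exactly why $\sigma_\p$ and $\chi_\p$ are defined only up to such scalars and why $(\ref{equation:3})$ is independent of them. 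The same dictionary applied to $\q$ matches $\hat\chi_\q$ with $\hat\sigma_\q$, the $p$-th power being taken precisely in the case $\N(\q)\equiv 1\pmod{p^2}$, where $\q$ splits completely in $\K''/\K$. Hence $\hat\chi_\q^{a_\q}\prod_{\p\in S}\chi_\p^{b_\p}$ kills $\O_\K^\times$ iff $\hat\sigma_\q^{a_\q}\prod_{\p\in S}\sigma_\p^{b_\p}=1$ in $\Gal(\L/\K'')$, which closes the equivalence.

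The step I expect to be the main obstacle is the bookkeeping in the last paragraph: fixing the explicit reciprocity identification of $\sigma_\p$ with $\chi_\p$ so that it is genuinely compatible with the ``hat'' conventions, and tracking the behaviour of the auxiliary prime $\q$ in the tower $\K\subset\K'\subset\K''$ (split versus inert in $\K''/\K'$), since this is what determines whether $\U_\q/\U_\q^{p^2}$ has order $p$ or $p^2$ and therefore which of $\chi_\q,\chi_\q^p$ is the correct local constraint; the case $p=2$ also needs a small separate check here. Everything else — the class field theory sequence, the order computations, and the insensitivity to the choice of $\P\mid\p$ — is routine once this identification is in place, exactly as in \cite[Chapter V]{gras}.
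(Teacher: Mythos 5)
Your proposal is correct and follows essentially the same route as the paper: the statement is deduced exactly as in the discussion preceding it, namely the class field theory exact sequence for $B/B^{p^2}$ together with condition (\ref{equation:2}), the observation that the inertia order at $\q$ is the order of $\psi_\q$ and that $\psi$ has order $p^2$ iff some $b_\p$ is a unit, and then the Kummer-theoretic translation over the governing field $\L=\K''(\sqrt[p^2]{\O_\K^\times})$ via the power-residue/Frobenius dictionary, as in Gras. Your extra care with the explicit reciprocity identification (including the split versus non-split behaviour of $\q$ in $\K''$, which is what the convention $\hat\sigma_\q$, $\hat\chi_\q$ encodes) only fills in details the paper delegates to \cite[Chapter V, \S 2, d)]{gras}.
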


\begin{Remark}
Infinitely many such sets exist by the Chebotarev Density Theorem. 
\end{Remark}

\subsection{Saturated sets} \label{section:saturated}

Take $p$, $\K$ as before, and let $S$ be a finite set of places of $\K$, coprime to $p$.

\begin{defi}
The $S$  set of places $\K$ is called saturated if $\V_S/(\K^\times)^p=\{1\}$. 
\end{defi}

Recall the following equality due to Shafarevich (see for example \cite[Chapter X, \S 7, Corollary 10.7.7]{NSW}):
\begin{eqnarray} \label{egalite:p-rank} d_p \G_S=|S_0|+|S_\infty| \delta_{2,p} -(r_1+r_2)+1-\delta  + d_p \V_S/(\K^\times)^p,
\end{eqnarray}
showing that $d_p \G_S$ is easy to compute when $S$ is saturated.

\begin{prop} \label{proposition1.6}
Let $S$  and $T$ be two finite sets of places of $\K$ coprime to $p$. Suppose $S$ is saturated. Then 
\begin{enumerate}
\item[$-$] if  $S\subset T$,  then $T$ is saturated;
\item[$-$] for every  tame place $v \notin S$, one has $d_p \G_{S\cup \{\p\}}=d_p \G_S +1$.
\end{enumerate}
\end{prop}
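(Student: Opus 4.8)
The plan is to derive both assertions from the elementary monotonicity $S \subset T \implies \V_T \subset \V_S$ noted in the introduction, combined with Shafarevich's $p$-rank equality (\ref{egalite:p-rank}).

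For the first assertion I would argue directly. Since $(\K^\times)^p \subset \V_T$ holds for every $T$, while $S \subset T$ gives $\V_T \subset \V_S$, the saturation hypothesis $\V_S = (\K^\times)^p$ yields $(\K^\times)^p \subset \V_T \subset \V_S = (\K^\times)^p$, hence $\V_T = (\K^\times)^p$; that is, $T$ is saturated. This step presents no obstacle.

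For the second assertion, I would fix a tame place $v \notin S$ with associated prime ideal $\p$ and set $T = S \cup \{\p\}$. Because $\p$ is a finite place coprime to $p$, the set $T$ is again finite and coprime to $p$, with $T_0 = S_0 \cup \{\p\}$ and $T_\infty = S_\infty$; moreover, by the first assertion $T$ is saturated, so $d_p \V_T/(\K^\times)^p = 0 = d_p \V_S/(\K^\times)^p$. Applying (\ref{egalite:p-rank}) to $S$ and to $T$ and subtracting, all of $|S_\infty|\delta_{2,p}$, $r_1+r_2$, $\delta$ and the $d_p\V$-term cancel, while the count of finite places rises by exactly one (and, since $v$ is tame, this new finite place contributes a full $1$ to the relevant term), so $d_p \G_{S\cup\{\p\}} - d_p \G_S = 1$, as claimed.

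The whole content lies in the saturation hypothesis. For a general $S$ one knows only $\V_{S\cup\{\p\}} \subseteq \V_S$, with the $p$-rank possibly dropping by one, so $d_p \G_{S\cup\{\p\}}$ could merely equal $d_p \G_S$; saturation forces $d_p \V_S = 0$, which is minimal, and the first assertion propagates this to $S\cup\{\p\}$, leaving no room for a drop, so the increment comes solely from the combinatorial term of (\ref{egalite:p-rank}). Consequently the only (mild) point needing attention is that a tame finite place does contribute $+1$, not $0$, to that term, i.e. that its first summand is really $\sum_{v \in S_0}\delta_{v,p}$ (which reduces to $|S_0|$ exactly when all finite places of $S$ are tame); this is precisely where the tameness assumption on $v$ enters.
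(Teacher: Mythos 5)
Your proposal is correct and follows essentially the same route as the paper: the first point from the inclusion $\V_T \subset \V_S$ (together with $\K^{\times p} \subset \V_T$), and the second from applying the Shafarevich rank formula (\ref{egalite:p-rank}) to $S$ and to $S\cup\{\p\}$, using the first point to kill both Kummer terms so that only the count of finite (tame) places changes, by exactly one. Your added remark that the formula's first term is really $\sum_{v\in S_0}\delta_{v,p}$, so that tameness of $v$ is what guarantees the full $+1$, is a sensible clarification but not a different argument.
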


\begin{proof}
The first point is  due to the fact that $\V_T \subset \V_S$, and the second point is  a consequence of (\ref{egalite:p-rank}) along with the first point.
\end{proof}

\begin{theo} \label{theo_saturated}
A finite set  $S$ coprime to $p$ is saturated if and only if, the Frobenii $\sigma_v$, $v \in S$, generate the whole group $\Gal(\K'(\sqrt[p]{\V_\emptyset})/\K')$.
\end{theo}

\begin{proof}
$\bullet$  
 Suppose the Frobenii generate the full Galois group. 
By hypothesis, for each degree-$p$ extension  $\L/\K'$ in 
 $\K'(\sqrt[p]{\V_\emptyset})/\K'$, 
there exists a place  $v\in S$ such that $v$ is inert in $\L/\K'$ (when $v\in S_\infty$, $v$ is ramified in $\L/\K'$).  Let us take  now $x\in \V_S$: then every $v \in S$ splits totally in $\K'(\sqrt[p]{x})/\K'$. As $\K'(\sqrt[p]{x}) \subset \K'(\sqrt[p]{\V_\emptyset})$, one deduces that $\K'(\sqrt[p]{x})=\K'$, and then $x\in (\K')^p$. As $[\K':\K]$ is coprime to $p$, one finally obtains  that $x\in \K^{\times p}$, so  $\CyB_S=\{0\}$.

$\bullet$ If $S$ is saturated, then for every finite set $T$ of tame places of $\K$ with $T\cap S=\emptyset$, one has $d_p \G_{S\cup T}=d_p \G_S + |T|$ by Proposition \ref{proposition1.6}. Then by the Gras-Munnier criterion, one has $\langle \sigma_v , v \in S \rangle = \Gal(\L'/\K')$.
\end{proof}

\begin{coro}
The finite  set $S$ coprime to $p$ is saturated if and only if, for every finite set $T$ of tame places of $\K$, there exists a cyclic degree $p$-extension 
 of $\K$ 
unramified outside $S\cup T$ but  ramified at each place of $T$.
\end{coro}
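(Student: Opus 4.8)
The plan is to deduce this corollary directly from Theorem~\ref{theo_saturated} together with Corollary~\ref{coro_grascriteria}. The statement to prove is: $S$ is saturated if and only if for every finite set $T$ of tame places of $\K$, there exists a cyclic degree-$p$ extension of $\K$ unramified outside $S\cup T$ but ramified at each place of $T$. Observe first that the condition on $T$ here is of exactly the shape addressed by Corollary~\ref{coro_grascriteria}, with the roles of the two sets interchanged: the extension is to be ramified at each place of the set playing the role of ``$S$'' in that corollary (here our $T$) with no condition imposed at the places of the auxiliary set (here our $S$). So Corollary~\ref{coro_grascriteria} tells us that such an extension exists if and only if the Frobenii $\sigma_v$, $v\in T$, satisfy a strongly nontrivial relation in the quotient $\Gal(\L'/\K')/\langle \sigma_v, v\in S\rangle$.

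First I would prove the forward implication. Assume $S$ is saturated. By Theorem~\ref{theo_saturated} the Frobenii $\{\sigma_v : v\in S\}$ generate all of $\Gal(\L'/\K')$, where $\L'=\K'(\sqrt[p]{\V_\emptyset})$. Hence the quotient $\Gal(\L'/\K')/\langle \sigma_v, v\in S\rangle$ is trivial. In a trivial group every relation holds, and in particular, given any nonempty finite set $T$ of tame places, the relation $\prod_{v\in T}\sigma_v^{a_v}=1$ holds for any choice of exponents $a_v\in\fq_p^\times$ — this is a strongly nontrivial relation in the sense defined after Theorem~\ref{theo:Gras-Munnier}. (If $T=\emptyset$ the requested extension is vacuous/trivial, so there is nothing to prove.) By Corollary~\ref{coro_grascriteria}, the desired cyclic degree-$p$ extension exists. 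This direction is essentially immediate.

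For the converse, assume that for every finite set $T$ of tame places the requested extension exists. I want to show $S$ is saturated, and by Theorem~\ref{theo_saturated} it suffices to show the $\sigma_v$, $v\in S$, generate $\Gal(\L'/\K')$. Suppose not; then the quotient $Q:=\Gal(\L'/\K')/\langle \sigma_v, v\in S\rangle$ is a nontrivial $\fq_p$-vector space, so there is a surjection $Q\twoheadrightarrow \fq_p$, equivalently a degree-$p$ subextension $\L''/\K'$ of $\L'/\K'$ in which every place of $S$ splits completely. By the Chebotarev Density Theorem, choose a tame place $v_0$ of $\K$ whose Frobenius in $\Gal(\L'/\K')$ maps to a generator of this $\fq_p$-quotient (such $v_0$ exists and can be taken coprime to $p$ and with trivial decomposition constraints at any finite set of auxiliary primes; in particular $v_0\notin S$). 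Take $T=\{v_0\}$. Then in $Q$ the single element $\sigma_{v_0}$ has nontrivial image, so no relation $\sigma_{v_0}^{a}=1$ with $a\in\fq_p^\times$ can hold in $Q$; hence the $\sigma_v$, $v\in T$, do not satisfy a strongly nontrivial relation in $Q$, and by Corollary~\ref{coro_grascriteria} no cyclic degree-$p$ extension unramified outside $S\cup T$ and ramified at $v_0$ exists. This contradicts the hypothesis, so $S$ must be saturated.

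The main obstacle, such as it is, is purely bookkeeping: making sure the quotient $\Gal(\L'/\K')/\langle\sigma_v, v\in S\rangle$ appearing in Corollary~\ref{coro_grascriteria} is correctly identified with the obstruction measured by Theorem~\ref{theo_saturated}, and making the Chebotarev choice of $v_0$ clean (avoiding the finitely many bad places, ensuring tameness, and ensuring $v_0\notin S$ so that $T$ and $S$ are disjoint as the corollary requires). Once those identifications are in place the two implications fall out of the two quoted results with no computation. One should also note explicitly the degenerate case $T=\emptyset$, where the statement is vacuous on both sides.
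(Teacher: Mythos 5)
Your proof is correct and follows essentially the same route as the paper: the forward direction from Theorem~\ref{theo_saturated} plus Corollary~\ref{coro_grascriteria} (trivial quotient makes every relation strongly nontrivial), and the converse via a Chebotarev choice of a tame place with nontrivial Frobenius image in $\Gal(\L'/\K')/\langle \sigma_v, v\in S\rangle$, which the paper states more tersely. Your explicit handling of the role-swap in Corollary~\ref{coro_grascriteria} and of the degenerate case $T=\emptyset$ is consistent with the paper's implicit conventions.
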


\begin{proof}
$\bullet$ If $S$ is saturated, then by Theorem \ref{theo_saturated} the Frobenii $\sigma_v$, $v \in S$, generate $\Gal(\L'/\K')$, and the result follows
by using Corollary \ref{coro_grascriteria}.

$\bullet$ 
Suppose that $S$ is such that for every finite set $T$ of tame places of $\K$, there exists a cyclic degree $p$-extension unramified outside $S\cup T$ and  ramified at each place of $T$. Then by Corollary \ref{coro_grascriteria} and the Chebotarev density theorem, $\Gal(\L'/\K')=\langle \sigma_v, \ v \in S\rangle$. By Theorem \ref{theo_saturated}, $S$ is saturated.
\end{proof}

\subsection{Spectral sequence}

Let $S$ and $T$ be two finite sets of places of $\K$ coprime to~$p$. Consider the following exact sequence of pro-$p$ groups
\begin{eqnarray} \label{equation:se}
1 \longrightarrow \H_{S,T} \longrightarrow \G_{S\cup T}(p) \longrightarrow \G_S(p) \longrightarrow 1.
\end{eqnarray}

\begin{defi} Put
$$\XX_{S,T}:=\H_{S,T}/[\H_{S,T},\H_{S,T}]\H_{S,T}^p,$$
and
$$\X_{S,T}:=\left(\XX_{S,T}\right)_{\G_S(p)}=\H_{S,T}/ [\H_{S,T},\G_S(p)]\H_{S,T}^p.$$
\end{defi}

Recall that as $\G_S(p)$ is a pro-$p$ group, then  $\fq_p\ldbrack \G_S(p) \rdbrack$ is a local ring.

\begin{lemm}
The abelian group $\XX_{S,T}$ is a $\fq_p\ldbrack \G_S(p) \rdbrack$-module  (with continuous action) that can be generated by $d_p \X_{S,T}$ generators.
Moreover, $d_p \X_{S,T} \leq |T|$.
\end{lemm}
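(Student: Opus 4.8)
The plan is to exhibit $\XX_{S,T}$ as a quotient of a free $\fq_p\ldbrack \G_S(p) \rdbrack$-module on $|T|$ generators, and then invoke (topological) Nakayama's lemma over the local ring $\fq_p\ldbrack \G_S(p) \rdbrack$ to convert the count of generators into the dimension of $\X_{S,T}$, the module of coinvariants.

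\emph{Setting up the action.} First I would observe that $\H_{S,T}$ is normal in $\G_{S\cup T}(p)$ with quotient $\G_S(p)$, so $\G_{S\cup T}(p)$ acts on $\XX_{S,T}=\H_{S,T}/[\H_{S,T},\H_{S,T}]\H_{S,T}^p$ by conjugation; the inner automorphisms coming from $\H_{S,T}$ itself act trivially on this maximal elementary abelian quotient (conjugation by $h$ sends $x$ to $x[x,h^{-1}]$, and $[x,h^{-1}]\in[\H_{S,T},\H_{S,T}]$), so the action factors through $\G_S(p)$. Since $\XX_{S,T}$ is an $\fq_p$-vector space with a continuous $\G_S(p)$-action, it is naturally a module over the completed group algebra $\fq_p\ldbrack \G_S(p) \rdbrack$.

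\emph{The generator bound.} The key mechanism is that $\H_{S,T}$ is the normal closure in $\G_{S\cup T}(p)$ of the inertia groups at the (tame) places of $T$. For a tame place $\p$, the inertia in $\G_\p(p)$ is procyclic, generated by a single element $\tau_\p$; since $\G_{S\cup T}(p)=\G_S(p)$ once we kill these inertia groups, the $|T|$ elements $\tau_\p$ ($\p\in T$) generate $\H_{S,T}$ as a closed normal subgroup, i.e.\ as a $\G_S(p)$-group. Passing to $\XX_{S,T}$, their images generate it as a $\fq_p\ldbrack \G_S(p) \rdbrack$-module, which gives a surjection from the free module of rank $|T|$; hence $d_p \X_{S,T}\le|T|$. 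Then, since $\fq_p\ldbrack \G_S(p) \rdbrack$ is local with residue field $\fq_p$ and maximal ideal $\mm$, its quotient $\XX_{S,T}/\mm\XX_{S,T}$ is exactly $\X_{S,T}=(\XX_{S,T})_{\G_S(p)}$ (coinvariants = killing the augmentation ideal action); topological Nakayama then says $\XX_{S,T}$ is generated by $\dim_{\fq_p}\X_{S,T}=d_p\X_{S,T}$ elements.

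\emph{Main obstacle.} The routine part is the Nakayama/coinvariants bookkeeping; the step needing genuine care is the identification of $\H_{S,T}$ as the normal closure of the tame inertia generators at $T$ — concretely, that $\K_{S\cup T}(p)$ over $\K_S(p)$ is ramified only at places above $T$ and that the inertia there is procyclic (tameness), so that exactly $|T|$ elements suffice to normally generate $\H_{S,T}$. I would justify this by noting $\K_S(p)$ is the fixed field of $\H_{S,T}$, so $\H_{S,T}$ is generated by inertia at primes ramifying in $\K_{S\cup T}(p)/\K_S(p)$; such primes lie above $T$ (places in $S$ are already allowed to ramify in $\K_S(p)$, and no others ramify), and tameness forces the ramification to be of order dividing $p$ hence procyclic, contributing one generator per place of $T$ up to $\G_S(p)$-conjugacy. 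Once this is in hand, everything else is formal.
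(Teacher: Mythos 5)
Your overall strategy is the paper's own: one procyclic tame inertia generator per place of $T$, transitivity of the conjugation action on the primes above a fixed $v\in T$, the resulting surjection $\bigoplus_{\p\in T}\fq_p\ldbrack \G_S(p)\rdbrack\twoheadrightarrow \XX_{S,T}$, and topological Nakayama over the local ring $\fq_p\ldbrack \G_S(p)\rdbrack$ for the first assertion. The genuine problem is in your justification of the key identification of $\H_{S,T}$: you argue that the primes ramifying in $\K_{S\cup T}(p)/\K_S(p)$ all lie above $T$ ("places in $S$ are already allowed to ramify in $\K_S(p)$, and no others ramify"). That intermediate claim is false in general: enlarging the ramification set can enlarge the inertia above places of $S$ as well. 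For instance, for $\K=\Q$, $p=3$, the inertia at $7$ in $\G_{\{7\}}(3)\simeq \Z/3$ is finite, whereas in Labute-type groups such as $\G_{\{7,13,19,107\}}(3)$ the inertia subgroups are isomorphic to $\Z_3$; hence $\K_{S\cup T}(p)/\K_S(p)$ can be ramified above $S$ too. If you literally take as generators the inertia at all primes ramified in $\K_{S\cup T}(p)/\K_S(p)$, you get more than $|T|$ conjugacy classes of generators and the bound $d_p\X_{S,T}\le |T|$ is no longer immediate.

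The statement you need is nonetheless true, and the correct argument does not require knowing where $\K_{S\cup T}(p)/\K_S(p)$ ramifies: let $N$ be the closed normal subgroup of $\G_{S\cup T}(p)$ generated by the inertia subgroups at the primes above the places of $T$. Its fixed field is a pro-$p$ Galois extension of $\K$ which is unramified outside $S\cup T$ and, by construction, unramified above $T$, hence unramified outside $S$, hence contained in $\K_S(p)$ by maximality of $\K_S(p)$; on the other hand each such inertia subgroup maps trivially into $\G_S(p)$, so $N\subseteq \H_{S,T}$, and therefore $N=\H_{S,T}$. Any extra ramification above $S$ is automatically absorbed by this maximality argument. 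With this repair (and one terminological correction: tameness makes the inertia image in the pro-$p$ setting procyclic, a quotient of $\Z_p$, not "of order dividing $p$" — only procyclicity is used), your proof agrees with the paper's: the images of the chosen generators $\tau_\p$, $\p\in T$, generate $\XX_{S,T}$ as a $\fq_p\ldbrack\G_S(p)\rdbrack$-module, taking coinvariants gives $\fq_p^{|T|}\twoheadrightarrow \X_{S,T}$, and Nakayama gives the generation statement.
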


\begin{proof}
The first part follows from Nakayama's lemma. For the second, 
the fact that $\G_S(p)$ acts transitively on the inertia groups $I_w$ of $w|v \in T$ in $\XX(S,T)$ implies
 $$\bigoplus_{i=1}^t \fq_p\ldbrack \G_S(p) \rdbrack \twoheadrightarrow \langle I_w, w|v \in T \rangle = \XX_{S,T},$$
where $t=|T|$. Taking the $\G_S(p)$-coinvariants, we obtain $\fq_p^t \twoheadrightarrow \X_{S,T}$.
\end{proof}

Applying the Hochschild-Serre spectral sequence to (\ref{equation:se}), one gets: 
\begin{lemm} \label{chase}Let $ S, T$ be two finite sets of  places of $\K$ coprime to $p$.
Then one has~: \label{exact-sequence1}
$$ 
 1 \longrightarrow H^1(\G_S(p),\fq_p) \longrightarrow H^1(\G_{S\cup T}(p),\fq_p) \longrightarrow \X_{S ,T}^\vee 
\longrightarrow{ \Sha_{S,p}} \longrightarrow {\Sha_{ S\cup T,p} }.
$$
Furthermore, the cokernel of the natural injection $\sha_{X,p} \hookrightarrow \CyB_X$ decreases in dimension as $X$ increases. 
\end{lemm}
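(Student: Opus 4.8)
The plan is to obtain the assertion from a short dimension count, using only Lemma~\ref{chase} together with Shafarevich's $p$-rank formula~(\ref{egalite:p-rank}). Fix finite sets $X\subseteq Y$ of places coprime to~$p$, and write $\Sha_{X,p}=\sha_{X,p}$ for the $2$-Shafarevich group of $\G_X(p)$, likewise for~$Y$. Since all groups in sight are finite $\fq_p$-vector spaces and the natural map $\sha_{X,p}\hookrightarrow\CyB_X$ of~(\ref{equation:1}) is injective, we have $\dim_{\fq_p}\coker(\sha_{X,p}\hookrightarrow\CyB_X)=d_p\CyB_X-d_p\Sha_{X,p}$, and similarly for~$Y$; so ``the cokernel decreases in dimension'' is exactly the inequality
$$d_p\Sha_{X,p}-d_p\Sha_{Y,p}\ \leq\ d_p\CyB_X-d_p\CyB_Y.$$
Before anything else I would reduce to the case $Y=X\cup T$ with $T:=Y\setminus X$ consisting only of finite places: for $p$ odd a real place lies in no nontrivial pro-$p$ extension, so adjoining it alters neither $\G_S(p)$ nor $\V_S$ and adds no new local constraint to $\Sha_{S,p}$ (the relevant local $H^2$ vanishing), hence changes neither side; and for $p=2$ one has $\delta_{2,p}=1$, so the count below is unaffected.

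Then I would feed this $S=X$ and $T$ into the five-term exact sequence of Lemma~\ref{chase},
$$1\longrightarrow H^1(\G_X(p),\fq_p)\longrightarrow H^1(\G_Y(p),\fq_p)\longrightarrow\X_{X,T}^\vee\longrightarrow\Sha_{X,p}\longrightarrow\Sha_{Y,p}.$$
The first map (inflation) is injective and $d_pH^1(\G_S(p),\fq_p)=d_p\G_S(p)=d_p\G_S$, so reading dimensions off the exactness at the three interior terms gives
$$d_p\G_Y-d_p\G_X\ =\ d_p\X_{X,T}\ -\ \dim_{\fq_p}\ker\!\big(\Sha_{X,p}\longrightarrow\Sha_{Y,p}\big).$$
On the other hand, subtracting~(\ref{egalite:p-rank}) for $\G_X$ from the same formula for $\G_Y$ (and using $d_p\V_S/(\K^\times)^p=d_p\CyB_S$) cancels all the terms depending only on~$\K$ and, as $T$ consists of finite places, leaves
$$d_p\G_Y-d_p\G_X\ =\ |T|\ +\ d_p\CyB_Y-d_p\CyB_X.$$

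Combining the last two displays,
$$d_p\CyB_X-d_p\CyB_Y\ =\ \big(|T|-d_p\X_{X,T}\big)\ +\ \dim_{\fq_p}\ker\!\big(\Sha_{X,p}\longrightarrow\Sha_{Y,p}\big).$$
Here $|T|-d_p\X_{X,T}\geq 0$ by the (unlabelled) lemma just before Lemma~\ref{chase}, and $\dim_{\fq_p}\ker(\Sha_{X,p}\to\Sha_{Y,p})=d_p\Sha_{X,p}-\dim_{\fq_p}\Im(\Sha_{X,p}\to\Sha_{Y,p})\geq d_p\Sha_{X,p}-d_p\Sha_{Y,p}$, because that image is a subspace of~$\Sha_{Y,p}$. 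Hence $d_p\CyB_X-d_p\CyB_Y\geq d_p\Sha_{X,p}-d_p\Sha_{Y,p}$, which is the desired inequality; as $X\subseteq Y$ was arbitrary, this says the cokernel is non-increasing in~$X$. (Conceptually, the inequality reflects the fact that inflation $\Sha_{X,p}\to\Sha_{Y,p}$ and the surjection $\CyB_X\twoheadrightarrow\CyB_Y$ dual to $\V_Y\subseteq\V_X$ form a commutative square, hence induce a surjection on cokernels; the dimension count avoids having to verify the compatibility of the Shafarevich--Koch injection with inflation directly.)

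The main point where care is needed, I expect, is the reduction in the first paragraph — getting the $\delta_{2,p}$ bookkeeping right in case $Y\setminus X$ is allowed to contain archimedean places — together with the routine check that the final arrow of the sequence in Lemma~\ref{chase} is indeed the one induced by inflation on $H^2$, so that the five-term sequence may be used exactly as written. Neither looks like a genuine obstacle; the substance of the argument is the elementary dimension count.
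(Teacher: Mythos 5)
Your proposal addresses only the second sentence of the lemma and assumes the first: you say explicitly that you will argue ``using only Lemma~\ref{chase}'' together with (\ref{egalite:p-rank}), but the five-term exact sequence \emph{is} the principal assertion of Lemma~\ref{chase}, so as a proof of the stated lemma this is circular. The nonroutine content lies precisely there: one applies Hochschild--Serre to $1\to \H_{S,T}\to \G_{S\cup T}(p)\to \G_S(p)\to 1$, identifies the $E_2^{0,1}$-term $H^1(\H_{S,T},\fq_p)^{\G_S(p)}$ with $\X_{S,T}^\vee$ (this is where the definition $\X_{S,T}=(\XX_{S,T})_{\G_S(p)}$ and the duality between coinvariants and invariants enter), and then chases the commutative diagram formed with the localization maps $H^2(\G_S(p),\fq_p)\to\oplus_{v\in S}H^2(\G_v,\fq_p)$ and $H^2(\G_{S\cup T}(p),\fq_p)\to\oplus_{v\in S\cup T}H^2(\G_v,\fq_p)$ to see (a) that the image of the transgression $\X_{S,T}^\vee\to H^2(\G_S(p),\fq_p)$ lies in $\Sha_{S,p}$, and (b) that the image of $\Sha_{S,p}$ in $H^2(\G_{S\cup T}(p),\fq_p)$ lies in $\Sha_{S\cup T,p}$, using that $\oplus_{v\in S}H^2(\G_v,\fq_p)\to\oplus_{v\in S\cup T}H^2(\G_v,\fq_p)$ is injective. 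None of this appears in your write-up (you even defer ``the routine check that the final arrow is the one induced by inflation''), so the lemma is at best half proved.

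For the ``furthermore'' clause, your dimension count is essentially correct and is a genuinely different route from the paper's: the paper simply augments the exact sequence by the commutative square with vertical injections $\Sha_{S,p}\hookrightarrow\CyB_S$, $\Sha_{S\cup T,p}\hookrightarrow\CyB_{S\cup T}$ and surjective bottom map $\CyB_S\twoheadrightarrow\CyB_{S\cup T}$, which immediately gives a surjection on cokernels --- exactly the argument you relegate to your parenthetical. Your count trades that compatibility check for an appeal to Shafarevich's formula; it goes through, with two pieces of bookkeeping worth making explicit: a finite place $v\in T$ coprime to $p$ but with $\N v\not\equiv 1\ (\mathrm{mod}\ p)$ is counted with weight $\delta_{v,p}=0$ in the correct form of (\ref{egalite:p-rank}), but such a $v$ cannot ramify in a $p$-extension and imposes no condition in $\V_S$, so it can be discarded exactly as you discard real places for odd $p$; and for $p=2$ the bound $d_p\X_{X,T}\le|T|$ from the unlabelled lemma does cover real places (inertia there is generated by one element), so your $\delta_{2,p}$ remark is fine. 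Still, to count as a proof of Lemma~\ref{chase} you must supply the spectral-sequence argument for the exact sequence itself.
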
 

\begin{proof} 
The Hochschild-Serre spectral sequence gives the exact commutative diagram:

\xymatrix{
 H^1(\G_S(p),\fq_p)  \ar@{^{(}->}[r] &  H^1(\G_{S\cup T}(p),\fq_p)  \ar@{->}[r] & \X_{S,T}^\vee  \ar@{->}[r] &  H^2(\G_S(p),\fq_p)  \ar@{->}[r] \ar@{->}[d]  & 
 H^2(\G_{S\cup T}(p),\fq_p)  \ar@{->}[d] \\
 & && \oplus_{v\in S} H^2(\G_v,\fq_p) \ar@{^(->}[r]& \oplus_{v \in S\cup T} H^2(\G_v,\fq_p)}
Chasing the trangression map $\X^\vee_{S,T} \stackrel{tg}{\longrightarrow} H^2(G_S(p))$ to the right gives that its image lies in $\Sha_{S,p}$ whose image to the right
lies in $\Sha_{S\cup T,p}$.
We now have the diagram

\xymatrix{
 1 \ar@{->}[r] & H^1(\G_S(p),\fq_p)  \ar@{->}[r] &  H^1(\G_{S\cup T}(p),\fq_p)  \ar@{->}[r] & \X_{S,T}^\vee  \ar@{->}[r] &  \Sha_{S,p}  \ar@{->}[r] \ar@{^(->}[d]  & \Sha_{S\cup T,p}  \ar@{^(->}[d] \\
& & && \CyB_S \ar@{->>}[r]& \CyB_{S\cup T} }
where the bottom horizontal map is surjective as the inclusion $\V_{S\cup T}/\K^{\times p} \hookrightarrow \V_S/\K^{\times p}$ is immediate from the definition of $\V_X$.
The second result follows.
\end{proof}

\begin{coro}
If the natural injection $\sha_{X,p} \hookrightarrow \CyB_X$ is an ismorphism, then for any set $Y$ we have
$\sha_{X\cup Y,p} \stackrel{\simeq}{\hookrightarrow} \CyB_{X \cup Y}$
\end{coro}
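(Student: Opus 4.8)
The plan is to read this straight off Lemma~\ref{chase} applied with $(S,T)=(X,Y)$; no new input is required. The last sentence of that lemma asserts that the $\fq_p$-dimension $d_p\coker\big(\Sha_{X,p}\hookrightarrow\CyB_X\big)$ is non-increasing as $X$ grows. Since $X\subseteq X\cup Y$ and, by hypothesis, this cokernel is trivial for $X$, it is also trivial for $X\cup Y$; that is precisely the assertion $\Sha_{X\cup Y,p}\xrightarrow{\simeq}\CyB_{X\cup Y}$.

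Unwinding why, I would reproduce the commutative square at the end of the proof of Lemma~\ref{chase}. With $S=X$ and $T=Y$ it reads
$$
\begin{array}{ccc}
\Sha_{X,p} & \longrightarrow & \Sha_{X\cup Y,p} \\
\downarrow & & \downarrow \\
\CyB_X & \twoheadrightarrow & \CyB_{X\cup Y},
\end{array}
$$
where the top arrow is the transition map coming from the five-term exact sequence, the bottom arrow is the surjection induced by $\V_{X\cup Y}\subseteq\V_X$, and the two vertical arrows are the natural injections of~(\ref{equation:0}) (precomposed with $\Sha_{?,p}\hookrightarrow\Sha_?$). By hypothesis the left vertical map is an isomorphism, so the composite ``down then across'' is surjective; by commutativity it equals the composite ``across then down'', namely $\Sha_{X,p}\to\Sha_{X\cup Y,p}\hookrightarrow\CyB_{X\cup Y}$. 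A surjective composite forces its final factor to be surjective, so $\Sha_{X\cup Y,p}\hookrightarrow\CyB_{X\cup Y}$ is onto; being also injective, it is an isomorphism.

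I do not expect a genuine obstacle here. The only points that require care are bookkeeping: one must take $Y$ coprime to $p$, so that the exact sequence~(\ref{equation:se}) and the Hochschild--Serre argument behind Lemma~\ref{chase} are available for the pair $(X,Y)$, and one should confirm that the square quoted above is literally the one produced in the proof of Lemma~\ref{chase}, so that its horizontal maps and vertical injections are the canonical ones. Everything else is the one-line diagram chase indicated; indeed, the two formulations above are the same argument, the first merely repackaging the chase as the monotonicity of the cokernel.
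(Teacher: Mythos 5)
Your argument is correct and is exactly the paper's route: the corollary is stated as an immediate consequence of Lemma~\ref{chase}, whose proof contains precisely the commutative square you reproduce, and whose final sentence is the monotonicity-of-the-cokernel statement you invoke. Nothing further is needed beyond the bookkeeping you already note (sets coprime to $p$, and that the vertical maps are the canonical injections of~(\ref{equation:0})).
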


Let us give an  obvious consequence of Lemma~\ref{exact-sequence1}.

\begin{lemm} \label{lemma1} Suppose that $H^1(\G_S(p),\fq_p) \simeq H^1(\G_{S\cup T}(p),\fq_p)$, then $\X_{S,T}^\vee \hookrightarrow \Sha_{S,p}$. If moreover  $ S \cup T$ is saturated then
 $\X_{S,T}^\vee \simeq \Sha_{S,p}$.
\end{lemm}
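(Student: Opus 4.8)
Lemma \ref{lemma1}: if $H^1(\G_S(p),\fq_p) \simeq H^1(\G_{S\cup T}(p),\fq_p)$, then $\X_{S,T}^\vee \hookrightarrow \Sha_{S,p}$; moreover if $S\cup T$ is saturated then this is an isomorphism.

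**The plan.** The first assertion is immediate from Lemma~\ref{exact-sequence1}: the five-term sequence
$$1 \longrightarrow H^1(\G_S(p),\fq_p) \longrightarrow H^1(\G_{S\cup T}(p),\fq_p) \longrightarrow \X_{S,T}^\vee \longrightarrow \Sha_{S,p} \longrightarrow \Sha_{S\cup T,p}$$
forces the transgression $\X_{S,T}^\vee \to \Sha_{S,p}$ to be injective as soon as the preceding map $H^1(\G_S(p),\fq_p) \to H^1(\G_{S\cup T}(p),\fq_p)$ is surjective, which is exactly the hypothesis (it is always injective by inflation, hence an isomorphism). So the content is really in the ``moreover''.

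**The main step.** To get surjectivity onto $\Sha_{S,p}$, by exactness I need the next map $\Sha_{S,p} \to \Sha_{S\cup T,p}$ to be zero, equivalently (since by Lemma~\ref{exact-sequence1} this map sits inside the surjection $\CyB_S \twoheadrightarrow \CyB_{S\cup T}$ compatibly with the injections $\Sha_{?,p}\hookrightarrow \CyB_?$) it suffices to show $\Sha_{S\cup T,p} = 0$. This is where saturation of $S\cup T$ enters: if $S\cup T$ is saturated then $\V_{S\cup T}/\K^{\times p} = \{1\}$ by definition, hence $\CyB_{S\cup T} = 0$, hence $\Sha_{S\cup T,p} \hookrightarrow \CyB_{S\cup T} = 0$ by \eqref{equation:1} (or by the Shafarevich--Koch injection $\Sha_{S\cup T} \hookrightarrow \CyB_{S\cup T}$ together with $\Sha_{S\cup T,p}\hookrightarrow \Sha_{S\cup T}$). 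With $\Sha_{S\cup T,p}=0$, the five-term sequence collapses to a short exact sequence
$$1 \longrightarrow H^1(\G_S(p),\fq_p) \longrightarrow H^1(\G_{S\cup T}(p),\fq_p) \longrightarrow \X_{S,T}^\vee \longrightarrow \Sha_{S,p} \longrightarrow 1,$$
and combined with the hypothesis $H^1(\G_S(p),\fq_p)\simeq H^1(\G_{S\cup T}(p),\fq_p)$ this yields $\X_{S,T}^\vee \simeq \Sha_{S,p}$.

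**Expected obstacle.** There is essentially no obstacle: once the five-term exact sequence of Lemma~\ref{exact-sequence1} is in hand, everything is a short diagram chase plus the definitional fact that saturated sets kill the Kummer group $\V$. The only point requiring a line of care is the reduction ``$\Sha_{S,p}\to\Sha_{S\cup T,p}$ is zero'' $\Longleftarrow$ ``$\Sha_{S\cup T,p}=0$'', which is trivial, and then invoking $\CyB_{S\cup T}=0$ from saturation; both are already available in the excerpt. Hence I would present this as a two-line consequence of the preceding lemma, emphasizing that the hypothesis handles the left end of the sequence and saturation handles the right end.
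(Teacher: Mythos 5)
Your proposal is correct and follows exactly the paper's own argument: the first claim is the five-term exact sequence of Lemma~\ref{exact-sequence1} together with the hypothesis on $H^1$, and the ``moreover'' follows because saturation gives $\V_{S\cup T}/\K^{\times p}=\{1\}$, hence $\CyB_{S\cup T}=0$, hence $\Sha_{S\cup T,p}=0$ via the Shafarevich--Koch injection, collapsing the sequence. Nothing to add.
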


\begin{proof} If $S\cup T$ is saturated then $\V_{S\cup T}/\K^{\times p}=\{1\}$, which implies that $\CyB_{S\cup T}=\{1\}$.
Hence, by (\ref{equation:0}) $\Sha_{S\cup T}=\{0\}$, and the same holds for $\Sha_{S\cup T,p}$. We conclude with Lemma \ref{exact-sequence1}.
\end{proof}

An important consequence of Lemmas~\ref{chase} and~\ref{lemma1} is that elements of $\X_{S,T}^\vee$ can give rise to elements of $\Sha_{S,p}$. The former can be found 
via ray class group computations. We thus have a method of producing independent elements of $\Sha_{S,p}$. If we find $d_p \CyB_S$ such elements, we have established
$\Sha_{S,p} \stackrel{\simeq}{\hookrightarrow} \Sha_S  \stackrel{\simeq}{\hookrightarrow} \CyB_S$, and thus computed $d_p \Sha_S$.


\section{Proof of the results}

 \subsection{A key Proposition}
 Let $p$ be a prime number. Let $\K$ be a number field and let $X$ be a finite set of places of $\K$ coprime to $p$.
The proof of Theorem \ref{theo:Gras-Munnier} is a   consequence of the following proposition.

\begin{prop} \label{prop:ramification-in-p2}
There exist (infinitely many) pairs of finite sets  of tame places~$S$ and~$T$ of~$\K$ such   that:
\begin{enumerate}[$(i)$]
\item $T \cup X$ is saturated and $d_p \G_{T\cup X}=d_p \G_X$;
\item  $d_p \G_{S\cup T \cup X}=d_p \G_{S \cup X}$;
\item  $|T|  \leq d_p  \Cl_\K+ r_1+r_2-1+\delta $  and $|S|  \leq r_1+r_2-1+\delta$; 
\item for each prime $\q \in T$, there exists a degree-$p^2$ cyclic extension $\K_\q$ of $\K^H$, abelian over~$\K$, unramified outside $S\cup X \cup  \{\q\}$, where the inertia group at $\q$ is of order  $p$.  
\end{enumerate}
\end{prop}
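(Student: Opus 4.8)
The plan is to isolate exactly what the pair $(S,T)$ must achieve relative to the two governing fields $\L'=\K'(\sqrt[p]{\V_\emptyset})$ and $\L=\K''(\sqrt[p^2]{\O_\K^\times})$, and then to produce such a pair by the Chebotarev density theorem in their compositum. First I would reduce $(ii)$ to a condition about $\L'$ alone. By the rank formula~(\ref{egalite:p-rank}), once $(i)$ holds we have $d_p\CyB_{T\cup X}=0$, hence (since $\V_{S\cup T\cup X}\subseteq\V_{T\cup X}=\K^{\times p}$) also $d_p\CyB_{S\cup T\cup X}=0$, and $|T|=d_p\CyB_X$; then $(ii)$ becomes $d_p\CyB_{S\cup X}=d_p\CyB_X$, i.e.\ $\V_{S\cup X}/\K^{\times p}=\V_X/\K^{\times p}$. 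By the Kummer duality behind Theorem~\ref{theo:Gras-Munnier} — the orthogonal complement of $\langle\sigma_w:w\in X\rangle$ in $\Gal(\L'/\K')$ is $\V_X/\K^{\times p}$ — this last equality holds if and only if $\sigma_v\in\langle\sigma_w:w\in X\rangle$ in $\Gal(\L'/\K')$ for every $v\in S$. So it suffices to build finite tame sets $S,T$, pairwise disjoint and disjoint from $X$, such that: $(a)$ the $\sigma_v$, $v\in T\cup X$, generate $\Gal(\L'/\K')$ and $|T|=d_p\CyB_X$; $(b)$ $\sigma_v\in\langle\sigma_w:w\in X\rangle$ in $\Gal(\L'/\K')$ for all $v\in S$; $(c)$ for each $\q\in T$, the Frobenii of the places of $S\cup X$ together with $\hat\sigma_\q$ satisfy a relation of the form~(\ref{equation:3}) in $\Gal(\L/\K'')$. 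Indeed $(a)$ gives $(i)$ via Theorem~\ref{theo_saturated} and~(\ref{egalite:p-rank}), $(b)$ gives $(ii)$ by the reduction just made, and $(c)$ gives $(iv)$ by Proposition~\ref{key-proposition0} (applied with $S\cup X$ in the role of its set ``$S$'').

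Producing $T$ is the easy half: choose $\q_1,\dots,\q_t$ with $t=d_p\CyB_X$ one at a time by Chebotarev, each with Frobenius escaping the span in $\Gal(\L'/\K')$ of $\{\sigma_w:w\in X\}$ and of the previously selected Frobenii — possible at every stage, since that span stays proper until the $t$-th prime — together with whatever auxiliary splitting conditions in $\L$ are dictated by~$(c)$. The bound $t=d_p\CyB_X\le d_p\CyB_\emptyset=d_p\Cl_\K+d_p\O_\K^\times=d_p\Cl_\K+r_1+r_2-1+\delta$ (using the exact sequence $0\to\O_\K^\times/\O_\K^{\times p}\to\V_\emptyset/\K^{\times p}\to\Cl_\K[p]\to 0$) is the first estimate in $(iii)$. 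For $S$: since $\Gal(\L/\K'')$ can be generated by at most $d_p\O_\K^\times=r_1+r_2-1+\delta$ elements, the idea is to pick $S$ of that size so that the Frobenii of $S\cup X$ generate enough of $\Gal(\L/\K'')$ that for each chosen $\q$ the required relation~(\ref{equation:3}) — including its unit-coefficient clause — can be solved; concretely this is arranged by controlling the norms of the primes of $S$ modulo $p^2$ (so that the pertinent Frobenii have the right orders and each $\hat\sigma_\q$ can be matched against them), all while keeping~$(b)$ in force. Then $(iv)$ follows from Proposition~\ref{key-proposition0}, and $|S|\le r_1+r_2-1+\delta$ is the second estimate in $(iii)$.

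The crux — and the step I expect to be the main obstacle — is to show that $(a)$, $(b)$, $(c)$ can be met \emph{simultaneously}. All three are Frobenius conditions in the compositum $\mathcal L:=\L'\cdot\L$, and one must verify that the union of conjugacy classes they cut out in $\Gal(\mathcal L/\K)$ is nonempty; Chebotarev then furnishes infinitely many admissible primes at each step (hence the ``infinitely many'' in the statement, as one may always avoid any prescribed finite set). The difficulty is that this requires controlling the amalgam of $\Gal(\L'/\K')$ and $\Gal(\L/\K'')$ over $\L'\cap\L$, and the base fields $\K'\subseteq\K''$ do not match: the restriction to a common subfield of a Frobenius taken over $\K'$ is twisted by the residue degree in $\K''/\K'$, so the naive expectation that the class of $\sigma_v$ in $\Gal(\L'/\K')$ pins down the class of $\sigma_v$ in $\Gal(\L/\K'')$ is false — and it is exactly this slack that one exploits to reconcile $(b)$ with $(c)$. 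The precise shape of $\L'\cap\L$, hence of the amalgam, depends on the capitulation map $\Cl_\K\to\Cl_{\K''}$ and on whether $\zeta_{p^2}$ lies in $\K(\zeta_p)$; this is where the argument breaks into the handful of cases that have to be treated separately.

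Once $S$ and $T$ are in hand the verification is routine: $(i)$ from Theorem~\ref{theo_saturated} and~(\ref{egalite:p-rank}); $(ii)$ from the reduction of the first paragraph; $(iii)$ from the two size estimates; $(iv)$ from Proposition~\ref{key-proposition0}, with $S\cup X$ playing the role of ``$S$'' there and relation~(\ref{equation:3}) guaranteed by $(c)$.
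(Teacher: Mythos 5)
Your reduction of $(i)$ and $(ii)$ is sound and essentially agrees with the paper: $T$ is chosen so that its Frobenii complete $E_X=\langle\sigma_w,\,w\in X\rangle$ to all of $\Gal(\L'/\K')$ with $|T|=d_p\CyB_X$, and, given $(i)$, the rank formula (\ref{egalite:p-rank}) turns $(ii)$ into $d_p\CyB_{S\cup X}=d_p\CyB_X$, i.e.\ $\sigma_v\in E_X$ for $v\in S$. The problem is that you stop exactly where the proof has to be done. Your conditions $(a)$--$(c)$ are not verified to be simultaneously realizable: you declare this ``the main obstacle,'' frame it as a nonemptiness-of-conjugacy-classes question in the amalgam of $\Gal(\L'/\K')$ and $\Gal(\L/\K'')$, and defer it to ``a handful of cases'' that you never treat. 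Worse, condition $(c)$ is not a splitting condition on the individual primes of $T$ and $S$ at all; it asks for a \emph{relation} of type (\ref{equation:3}), with at least one exponent a unit mod $p^2$, linking the yet-to-be-chosen $S$ with every $\q\in T$, and your plan chooses $T$ first ``together with whatever auxiliary splitting conditions in $\L$ are dictated by $(c)$'' — a circularity you do not break. ``Controlling the norms of the primes of $S$ modulo $p^2$ so that each $\hat\sigma_\q$ can be matched'' is a hope, not an argument.

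The missing idea, which is the actual content of the paper's proof, is to work with Frobenii in the single compositum $\F=\K''(\sqrt[p^2]{\O_\K^\times},\sqrt[p]{\V_\emptyset})$ over $\K'$ and to exploit that $\Gal(\F_1/\K')$, with $\F_1=\K''(\sqrt[p]{\V_\emptyset})$, has exponent $p$: hence $\sigma_\q^p\in\Gal(\F/\F_1)$ for \emph{every} prime $\q$, with no condition imposed on $T$ beyond the one needed for $(i)$. One then chooses $S$ so that its Frobenii, together with those places of $X$ whose Frobenii already lie in $\Gal(\F/\F_1)$, form an $\fq_p$-basis of $\Gal(\F/\F_1)$. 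This single choice does two jobs at once: the restriction of each $\sigma_\p$, $\p\in S$, to $\F_0=\K'(\sqrt[p]{\V_\emptyset})$ is trivial, which gives $(ii)$ (a stronger form of your $(b)$), and $\hat\sigma_\q$ is automatically an $\fq_p$-combination of this basis, which produces the relation (\ref{equation:3}) — unit coefficient included — needed for $(iv)$ via Proposition \ref{key-proposition0}; the bound $|S|\le r_1+r_2-1+\delta$ falls out since $d_p\Gal(\F/\F_1)\le d_p\,\O_\K^\times/\O_\K^{\times p}$. The case division is then simply $\F_0\cap\K''=\K'$ versus $\K''\subseteq\F_0$ (where $\K'=\K$), not the capitulation map $\Cl_\K\to\Cl_{\K''}$ you anticipate. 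Without this (or an equivalent) mechanism your proposal establishes the easy reductions but not the proposition.
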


\medskip

Put $\F_0=\K'(\sqrt[p]{\V_\emptyset})$, $\L_0=\K'(\sqrt[p]{\O_\K^\times})$,  $\K''=\K(\zeta_{p^2})$,  $\L_1=\K''(\sqrt[p^2]{\O_\K^\times})$, $\F_1=\K''(\sqrt[p]{\V_\emptyset})$, and $\F=\L \F_0=\K''(\sqrt[p^2]{\O_\K^\times}, \sqrt[p]{\V_\emptyset})$.  
Put $\G=\Gal(\F/\K')$.

\begin{proof} (of Proposition \ref{prop:ramification-in-p2}.)

Given a prime $\p $ of $\O_\K$, coprime to $p$,  we choose a prime $\P|\p$ of $\F$, and we  consider its Frobenius $\sigma_\p:=\sigma_\P$ in the Galois group $\Gal(\F/\K')$  and  its quotients. As mentioned earlier, this is well-defined up to a nonzero scalar multiple in $\Gal(\F/\K')$ and that is all we need.

\medskip

Put $E_X=\langle {\sigma_\p}_{|\F_0}, \p \in X\rangle \subset \Gal(\F_0/\K')$ the subgroup of $\Gal(\F_0/\K')$ generated by the Frobenii of the primes $\p \in X$. Put $m_\X=d_p \V_\emptyset- d_p E_X$.

\medskip

a) \underline{Assume first that $\F_0\cap \K''= \K'$.}

{\tiny
$$\xymatrix{ 
& & \L=\K''(\sqrt[p^2]{\O_K^\times}) \ar@{-}[r]   & \F  \ar@{-}[ld] \\
\K'' \ar@{-}[r]&\L_1=\K''(\sqrt[p]{\O_\K^\times}) \ar@{-}[ru] \ar@{-}[r]& \F_1=\K''(\sqrt[p]{\V_\emptyset})&\\
\K'   \ar@{-}[u] \ar@{-}[r]& \L_0=\K'(\sqrt[p]{\O_\K^\times}) \ar@{-}[u] \ar@{-}[r] &\ar@{-}[u] \F_0=\K'(\sqrt[p]{\V_\emptyset})&
}$$
}

We choose $S$ and $T$ as follows: 
\begin{enumerate}
\item[$-$] let $T$ be {\it any} set of primes  $\q$ whose Frobenii $\sigma_\q$  in $\G$  are such that the restriction in $\Gal(\F_0/\K')$ forms  an $\fq_p$-basis of a subspace  in direct sum with $E_X$: in other words, $$\displaystyle{\Gal(\F_0/\K')=\langle {\sigma_\q}_{|\F_0}, \q \in T \rangle \bigoplus E_X},$$ and $\displaystyle{ \langle {\sigma_\q}_{|\F_0}, \q \in T \rangle = \bigoplus_{\q \in T} \langle  {\sigma_\q}_{|\F_0}\rangle}$. 
\item[$-$]   let $\tilde{X}$ be those places of $X$ whose Frobenii lie in $\Gal(\F/\F_1)$ and
let $S$ be {\it any} set of primes  $\p$ whose Frobenii $\sigma_\p$  in $\G$ form in direct sum with the Frobenii in $\tilde{X}$ a basis of $\Gal(\F/\F_1)$.
\end{enumerate} 

\smallskip
 
As $\Gal(\F_1/\K')$  has exponent   $p$,  we see for each $\q \in T$, $\sigma_\q^p \in \Gal(\F/\F_1)$. Observe also that if ${\sigma_\q}_{|\K''}$ is not trivial (which is equivalent to $\N(\q) \neq 1 \ ({\rm mod} \ p^2)$), then $\sigma_\q^p$ is the Frobenius at $\P$ in $\Gal(\F/\F'')$; otherwise $\sigma_\q^p$ is the $p$-power of the Frobenius at 
$\mathfrak Q \mid \q$ in $\Gal(\F/\F'')$.

\smallskip

By Theorem \ref{theo_saturated} the set $T\cup X$ is saturated. 
Moreover thanks to the condition on the direct sum for the Frobenius at $\p \in T$,   by  Theorem \ref{theo:Gras-Munnier}, there is no cyclic degree-$p$ extension of $\K$,  unramified outside $T \cup X$ and totally ramified at any nonempty subset of places of~$T$: thus $d_p \G_{T\cup X}=d_p \G_X$, and $(i)$ holds.

\smallskip

Moreover as each place of $S$ splits totally in the governing extension $\F_0/\K'$, then again by Theorem \ref{theo:Gras-Munnier}, $d_p \G_{S\cup T\cup X}=d_p \G_{S\cup X}$, and $(ii)$ holds.

\smallskip
 
 The condition on $S$ gives a relation of type  (\ref{equation:3}) in $\Gal(\F/\F_1) \subset \Gal(\F/\L_1)$
 for the set $S\cup \tilde{X}\cup \{\q\}$, $\q \in T$.
 After taking the 
 quotient of this relation by
  $\Gal(\F/\L)$, we obtain
by Proposition \ref{key-proposition0} that for each prime $\q \in T$, the existence of a degree-$p^2$ cyclic extension $\K_\q/\H$, abelian over $\K$ and unramified outside $S\cup X \cup \{\q\}$ for
  which the inertia at $\q$ is of order $p$, proving $(iv)$.

  \smallskip
  
  $(iii)$ is obvious.

\medskip

b)  \underline{Assume now that that $\K''\subset \F_0$}.

Let $\A_i, i=1,\cdots, d$ be  ideals of $\O_\K$,  whose classes are a system of minimal generators of $\Cl_\K[p]$, and let $a_i \in \O_\K^\times$ such that $(a_i)=\A^p_i$.  Put $A=\langle a_1,\cdots, a_d \rangle \K^{\times p}/\K^{\times p}   \subset \V_\emptyset/\K^{\times p}$.
Note $\K'(\sqrt[p]{\V_\emptyset})=\K'(\sqrt[p]{A},\sqrt[p]{\O_\K^\times})$. 

As $\F_0/\K'$ and $\K''/\K'$ are abelian $p$-extensions, the containment $\K''\subset \F_0$ implies $\K'=\K$.
Moreover $\L_0 \cap \K''(\sqrt[p]{A})=\K''$.
{\tiny
$$\xymatrix{& \L=\K''(\sqrt[p^2]{\O_\K^\times})  \ar@{-}[r]& \F \\
\L_0=\K'(\sqrt[p]{\O_\K^\times}) \ar@{-}[ur] \ar@{-}[r] & \F_0=\F_1 =\K'(\sqrt[p]{\V_\emptyset})  \ar@{-}[ur]  &   \\
 \K'' \ar@{-}[u]  \ar@{-}[r]& \K''(\sqrt[p]{A})  \ar@{-}[u]   & \\
  \K' \ar@{-}[u] \ar@{-}[r] & \K'(\sqrt[p]{A})\ar@{-}[u]& 
}
$$
}

Now take $T$ and $S$ as in case a). 
\end{proof}

\begin{rema}
Observe that one can take $T$ such that $|T|\leq m_\X= d_p \V_\emptyset -d_p E_X$.
\end{rema}

\subsection{Proof of Theorem \ref{theo:main} }
Let $S$ and $T$ as in Proposition \ref{prop:ramification-in-p2}.
As $X\cup T$ is saturated, by $(i)$ of Proposition \ref{prop:ramification-in-p2} and (\ref{egalite:p-rank}),  one obtains  $|T|=d_p \CyB_X$.
Moreover,  $S\cup X \cup T$ is also saturated and in particular, $\CyB_{S\cup X\cup T} \simeq  \Sha_{S\cup X \cup T,p} =\{0\}$.
With $(ii)$, we see  that $\d_p \CyB_{S\cup X}=|T|$ so $(i)$ and $(ii)$ imply:  $\CyB_{S\cup X} \simeq \CyB_X$.

\smallskip

Now let us take  the spectral sequence of the short exact sequence $$1 \longrightarrow \H_{S\cup X,T} \longrightarrow \G_{S\cup X\cup T}(p) \longrightarrow \G_{S \cup X}(p) \longrightarrow 1$$ to obtain by Lemma \ref{exact-sequence1}:
$$1 \rightarrow H^1(\G_{S\cup X}(p),\fq_p) \rightarrow H^1(\G_{S\cup X\cup T}(p),\fq_p) \rightarrow \X_{S\cup X,T}^\vee \rightarrow \Sha_{S \cup X,p} \rightarrow \Sha_{S\cup X\cup T,p} =\{0\}.$$ 
Hence, $\X_{S\cup X,T}^\vee \simeq \Sha_{S\cup X,p}$.
Now $(iv)$ of Proposition \ref{prop:ramification-in-p2} implies that $d_p \X_{S\cup X,T} \geq |T|$, and as  obviously $d_p \X_{S\cup X,T} \leq |T|$, we finally get
$d_p\Sha_{S\cup X,p}=|T|$. 

Hence $d_p \Sha_{S\cup X,p}=|T|= d_p \CyB_{S\cup X} = d_p \CyB_X$. Thanks to (\ref{equation:1}), one has 
$$ \Sha_{S\cup X,p} \simeq \Sha_{S\cup X} \simeq \CyB_{S \cup X} \simeq \CyB_X .$$

\subsection{Proof of Corollary \ref{coro2}} Let us choose $S$ and $T$ as in proof of Proposition \ref{prop:ramification-in-p2}. Let us write $T=\{\p_1,\cdots, \p_{m_X}\}$, where $m_X=d_p \CyB_\emptyset-d_p E_X$. Put $S_0=S\cup X$ and, for $i \geq 0$,  $S_{i+1}=S\cup X\cup  \{\p_i\}$. Here, as $d_p \G_{S_i}=d_p \G_{S_{m_X}}$, the spectral sequence shows that 
\begin{eqnarray} \label{equality:diagramm}
\fq_p \hookrightarrow \Sha_{S_i,p} \longrightarrow \Sha_{S_{i+1},p},
\end{eqnarray}
 in particular $d_p \Sha_{S_{i},p} \leq d_p \Sha_{S_{i+1},p} +1$. After noting that  $d_p \Sha_{S_{m_X},p}=0$ (the set $X\cup T$ is saturated) and  that $d_p \Sha_{S_0,p}=|T|=m_X$, then we conclude that 
 $d_p \Sha_{S_i,p}=m_\X-i$. 
 Observe also that (\ref{equality:diagramm}) induces:
 $$\fq_p \hookrightarrow \Sha_{S_i} \longrightarrow \Sha_{S_{i+1}},$$ and as before $d_p \Sha_{S_i}=m-i$. The isomorphisms
 $\Sha_{S_i,p} \simeq \Sha_{S_i}$'s become obvious.
 
 We have proved:
 
 \begin{coro}
 One has $\Sha_{S_i} \simeq \fq_p^{m_X-i}$.
  \end{coro}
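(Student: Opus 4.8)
The final statement to prove is the last Corollary: with $S$ and $T$ chosen as in the proof of Proposition~\ref{prop:ramification-in-p2} and with the nested sets $S_i = S \cup X \cup \{\p_1,\dots,\p_{i}\}$ drawn from $T = \{\p_1,\dots,\p_{m_X}\}$, one has $\Sha_{S_i} \simeq \fq_p^{m_X - i}$. Essentially everything is already in place in the paragraph immediately preceding the statement; the task is to package it. The plan is to first record that, since $d_p\G_{S_i}$ is constant along the chain (all the $S_i$ differ only by tame primes $\p_j \in T$, and $X\cup T$ is saturated so by Proposition~\ref{proposition1.6} adding such primes raises $d_p\G$ by exactly one — but here the relevant statement is that the $d_p\G_{S_i}$ stabilize because the Frobenii of the $\p_j$ are, by construction in case a) of Proposition~\ref{prop:ramification-in-p2}, in direct sum with $E_X$ inside $\Gal(\F_0/\K')$, forcing $d_p\G_{S\cup X\cup T} = d_p\G_{S\cup X}$ and similarly at each intermediate stage), the Hochschild–Serre sequence of Lemma~\ref{chase} applied to $1 \to \H_{S_i,\{\p_{i+1}\}} \to \G_{S_{i+1}}(p) \to \G_{S_i}(p) \to 1$ degenerates in its $H^1$ part, i.e. $H^1(\G_{S_i}(p),\fq_p)\simeq H^1(\G_{S_{i+1}}(p),\fq_p)$, since $d_p H^1 = d_p\G$. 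This yields the injection $\X_{S_i,\{\p_{i+1}\}}^\vee \hookrightarrow \Sha_{S_i,p}$ together with the connecting map to $\Sha_{S_{i+1},p}$.

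Second, I would invoke Lemma~\ref{lemma1} (or rather its mechanism): because $\X_{S_i,\{\p_{i+1}\}}^\vee$ has $\fq_p$-dimension at most $1$ (a single prime $\p_{i+1}$ in the auxiliary set, so $d_p\X \le |T_i| = 1$) and is nonzero by property $(iv)$ of Proposition~\ref{prop:ramification-in-p2} — the degree-$p^2$ extension $\K_{\p_{i+1}}/\H$ with inertia of order $p$ at $\p_{i+1}$ is exactly what produces a nonzero class in $\X_{S_i,\{\p_{i+1}\}}^\vee$ — one gets an exact sequence $\fq_p \hookrightarrow \Sha_{S_i,p} \to \Sha_{S_{i+1},p}$, which is display~(\ref{equality:diagramm}). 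Consequently $d_p\Sha_{S_i,p} \le d_p\Sha_{S_{i+1},p} + 1$ for each $i$. Running this from $i = m_X$ downward, with the two endpoint computations $d_p\Sha_{S_{m_X},p} = 0$ (since $X\cup T \subset S_{m_X}$ is saturated, hence $\CyB_{S_{m_X}} = 0$ and by~(\ref{equation:0}) $\Sha_{S_{m_X}} = 0$) and $d_p\Sha_{S_0,p} = |T| = m_X$ (this is precisely the output of the proof of Theorem~\ref{theo:main}, where $\X_{S\cup X,T}^\vee \simeq \Sha_{S\cup X,p}$ and $d_p\X_{S\cup X,T} = |T|$ by $(iv)$), the chain of inequalities is forced to be an equality at every step: $d_p\Sha_{S_i,p} = m_X - i$ for all $i$.

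Third, I would upgrade from $\Sha_{S_i,p}$ to $\Sha_{S_i}$: by~(\ref{equation:1}) we have $\Sha_{S_i,p} \hookrightarrow \Sha_{S_i} \hookrightarrow \CyB_{S_i}$, and $d_p\CyB_{S_i}$ can be computed from~(\ref{egalite:p-rank}) — since $X\cup T$ is saturated, $d_p\CyB_{S_i} = d_p\G_{S_i} - |S_{i,0}| + (r_1+r_2) - 1 + \delta - |S_{i,\infty}|\delta_{2,p}$, and a short bookkeeping (or simply the already-proven chain $\CyB_{S\cup X} \simeq \CyB_X$ from Theorem~\ref{theo:main} together with the surjections $\CyB_{S_0} \twoheadrightarrow \CyB_{S_1} \twoheadrightarrow \cdots \twoheadrightarrow \CyB_{S_{m_X}} = 0$ and the dimension count $d_p\CyB_{S_0} = m_X$) shows $d_p\CyB_{S_i} = m_X - i$ as well. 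Squeezing $m_X - i = d_p\Sha_{S_i,p} \le d_p\Sha_{S_i} \le d_p\CyB_{S_i} = m_X - i$ collapses both inclusions to isomorphisms, so $\Sha_{S_i} \simeq \Sha_{S_i,p} \simeq \fq_p^{m_X - i}$.

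**The main obstacle.** The only genuinely substantive input is the nonvanishing and rank-one-ness of the modules $\X_{S_i,\{\p_{i+1}\}}^\vee$ driving the connecting maps — i.e. ensuring that at each stage the $\fq_p$ on the left of~(\ref{equality:diagramm}) really injects into $\Sha_{S_i,p}$ and is not killed. This is handled by property $(iv)$ of Proposition~\ref{prop:ramification-in-p2}, but one must check that $(iv)$ applies not just to the pair $(S\cup X, T)$ but to each intermediate pair $(S_i, \{\p_{i+1}\})$; this follows because the defining direct-sum conditions on the Frobenii of primes in $T$ (and of $S$ relative to $\tilde X$) in the governing field $\F$ are preserved under passing to the sub-collection $\{\p_1,\dots,\p_{i+1}\}$ and the quotient accounting for $S_i$, so Proposition~\ref{key-proposition0} delivers the required degree-$p^2$ extension $\K_{\p_{i+1}}/\H$ at each step. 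Everything else is diagram-chasing with Lemma~\ref{chase}, the saturation criterion (Theorem~\ref{theo_saturated}), and the $p$-rank formula~(\ref{egalite:p-rank}), all already established.
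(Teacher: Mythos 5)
Your argument is correct and follows essentially the same route as the paper: constancy of $d_p \G_{S_i}$ along the chain, the one-step inequality $d_p \Sha_{S_i,p}\le d_p \Sha_{S_{i+1},p}+1$ from the spectral sequence (with $d_p \X_{S_i,\{\p_{i+1}\}}\le 1$ and nonvanishing via $(iv)$), the endpoint values $d_p \Sha_{S_0,p}=m_X$ and $d_p \Sha_{S_{m_X},p}=0$, and the squeeze $\Sha_{S_i,p}\hookrightarrow \Sha_{S_i}\hookrightarrow \CyB_{S_i}$ with $d_p \CyB_{S_i}=m_X-i$. The only superfluous point is your worry about re-verifying $(iv)$ for the intermediate pairs $(S_i,\{\p_{i+1}\})$: the extension $\K_{\p_{i+1}}$ of Proposition~\ref{prop:ramification-in-p2} is unramified outside $S\cup X\cup\{\p_{i+1}\}\subset S_{i+1}$, so it applies verbatim at every stage.
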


Take $X=\emptyset$ to have Corollary \ref{coro2}.

\section{Examples}
 
In this section we give a few examples of fields $\K$ and sets $S$
such that in the diagram 
$$\Sha_\emptyset \hookrightarrow \CyB_\emptyset \twoheadrightarrow \CyB_S \hookleftarrow \Sha_S,$$
the two maps on the right are isomorphisms.
In our first two examples we show 
the left map is {\it not} an isomorphism. Thus we give explicit examples where $\Sha_X$ increases as $X$ does, in contrast to the wild case.
\smallskip

In the third example we establish 
$$ \CyB_\emptyset \stackrel{\simeq}{\twoheadrightarrow} \CyB_S \stackrel{\simeq}{\hookleftarrow} \Sha_S,$$
but do not know whether $d_p \Sha_\emptyset < d_p \Sha_S$. Indeed, we suspect equality in that case.

\smallskip
In the examples below, $p_i$ refers to the $i$th prime of $\K$ above the rational prime $p$ as MAGMA presents the factorization. All code was run unconditionally, that is we did {\it not} use  GRH bounds for computing ray class groups.
 
 \begin{Example}
Let $\K$ be the unique degree $3$ subfield of $ \Q(\zeta_{7})$  and let $p=2$. Then one can easily compute that 
$\K$ has trivial class group and, since $\K$ is totally real,
$d_p \CyB_\emptyset= d_p \O^\times_\K /\O^{\times 2}_\K + d_p Cl_\K[2]=3$. 
Clearly $G_\emptyset = \{e\}$ and $d_p \Sha_\emptyset=0$ so $\Sha_\emptyset \hookrightarrow \CyB_\emptyset$ has $3$-dimensional cokernel.
Set $S = \{37_1, 181_1, 293_1\}$ and
$T=\{307_1,311_1,349_1\}$. 
One computes
$d_p H^1(G_T,\fq_2)=0$ so $T$ and $S\cup T$ are saturated.
The $2$-parts of the ray class groups for conductors  $S\cup T$ and $S$ are  $ (\Z/4)^3$ and $(\Z/2)^3$ respectively, so the 
the map $H^1(G_S,\fq_2) \rightarrow H^1(G_{S\cup T},\fq_2)$ is an isomorphism and 
$d_p \X_{S\cup X,T}^\vee \geq 3$.
As  $d_p \Sha_S \leq d_p \CyB_S  \leq  d_p \CyB_\emptyset =3$, we see $d_p \Sha_S=3$. 
\end{Example}

\begin{Example}
Let $\K$ be the unique degree $3$ subfield of $ \Q(\zeta_{349})$  and let $p=2$. 
Here $\K$ has class group $(\Z/2)^2$ and is again totally real, so
$d_p \CyB_\emptyset=  d_p \O^\times_\K /\O^{\times 2}_\K + d_p Cl_\K[2]=5$.
 One computes the class group of the Hilbert class field of $\K$ is trivial
 so $\G_\emptyset =\Z/2 \times \Z/2$ and  has three relations.
 Thus $d_p \Sha_\emptyset =d_p H^2(G_\emptyset,\fq_2)=3$ so the map $\Sha_\emptyset \hookrightarrow \CyB_\emptyset$ has $2$-dimensional cokernel.
Set $S = \{701_1, 2857_1, 3169_1\}$ and
$T=\{367_1,397_1,401_1,409_1,449_1\}$. One computes 
$d_p H^1(G_T,\fq_2)=2$ so $T$ and $S\cup T$ are saturated.
The $2$-parts of the ray class groups for conductors  $S\cup T$ and $S$ are  $ \Z/4 \times (\Z/8)^2\times \Z/16 \times \Z/32$ and $(\Z/2)^5$ respectively, so the 
the map $H^1(G_S,\fq_2) \rightarrow H^1(G_{S\cup T},\fq_2)$ is an isomorphism and 
$d_p \X_{S\cup X,T}^\vee \geq 5$.
As  $d_p \Sha_S \leq d_p \CyB_S  \leq  d_p \CyB_\emptyset =5$, we see $d_p \Sha_S=5$. 
\end{Example}
 
\begin{Example}
Let $\K=\Q[x]/(f(x))$ where $f(x)= x^{12}+339x^{10}-19752x^8-2188735x^6+284236829x^4+4401349506x^2+15622982921$. This polynomial is irreducible
and $\K$ is totally complex with small root discriminant and has class group $(\Z/2)^6$.  The field $\K$ has been used as a starting point in finding infinite towers of
totally complex number fields whose root discriminants are the smallest currently known.
Set $$S= \{7_2,11_1,43_1,47_3,67_3,97_1 \},\,\,
T= \{ 5_1, 13_1, 19_1,19_2,23_1, 23_2, 23_3, 29_1, 31_1, 61_1, 149_1,149_4\}.$$
As $\K$ is totally complex, $$d_p \CyB_\emptyset =  d_p \O^\times_\K /\O^{\times 2}_\K + d_p Cl_\K[2]= 6+6=12= \# T.$$
One computes  
$d_p H^1(G_T,\fq_2)=6$ so $T$ and $S\cup T$ are saturated.
The $2$-parts of the ray class groups for conductors  $S\cup T$ and $S$ are  
$(\Z/4)^5 \times (\Z/8)^4 \times (\Z/16)^3$ and
$(\Z/2)^{11} \times \Z/8$. 
respectively, so the 
the map $H^1(G_S,\fq_2) \rightarrow H^1(G_{S\cup T},\fq_2)$ is an isomorphism.
From this data one can only  conclude 
$d_p \X_{S\cup X,T}^\vee \geq 11$. On the other hand, for every $v\in T$ one computes the $2$-part of the ray class group for conductor $S \cup \{v\}$ has order
at least $2^{15} > 2^{14}$. As the latter quantity is the order of the $2$-part of the ray class group with conductor $S$, we  get $\# T=12$ independent elements
of $ \X_{S\cup X,T}^\vee$  so $d_p \Sha_S \geq 12$. As $d_p \CyB_S \leq d_p \CyB_\emptyset =12$, we have $d_p \Sha_S =12$.
{\it We suspect that in this case $d_p \Sha_\emptyset=12$.}

\end{Example}



\end{document}